\theoremstyle{plain}
\newtheorem{theorem}{Theorem}
\newtheorem{proposition}{Proposition}
\newtheorem{lemma}{Lemma}
\theoremstyle{definition}
\newtheorem{assumption}{Assumption}
\theoremstyle{remark}
\newtheorem{remark}{Remark}
\author{Vincent Beffara}
\date{\today}
\title{Critical percolation on mesoscopic triangulations}
\begin{document}

\maketitle

\begin{abstract}
  We extend Smirnov's proof of  the existence and conformal invariance
  of the scaling limit of  critical site-percolation on the triangular
  lattice  to  particular  sequences  of  periodic  graphs  with  more
  arbitrary  large-scale  structure,   obtained  by  piecing  together
  triangular  regions of  the triangular  lattice. While  not formally
  speaking a scaling  limit statement (as the graphs  are not rescaled
  versions of each  other), the result is a weak  form of universality
  for critical percolation.
\end{abstract}

\section*{Introduction}

The main goal  of statistical physics is to  understand the long-range
properties of  a model  described by local,  microscopic interactions.
Typically, the  model will  be defined  on a lattice  and depend  on a
parameter (usually  denoted by $\beta$  and interpreted as  an inverse
temperature) and exhibit a \emph{phase  transition} at a certain value
$\beta_c$ of the  parameter. This means that  its qualitative behavior
changes  drastically between  the  high-temperature  regime ($\beta  <
\beta_c$), where  decorrelation between  distant regions  is observed,
and  the   low-temperature  regime  ($\beta  >   \beta_c$)  for  which
long-range  ordering   is  present.   The  transition  can   often  be
characterized  by the  behavior of  a \emph{correlation  length} which
depends on the temperature and diverges at the critical point.

Such systems  are most interesting  when considered at  their critical
point. Indeed, the divergence of the correlation length indicates that
if one can define a \emph{scaling  limit} when the mesh of the lattice
is sent to $0$, such a limit should be non-trivial and invariant under
scalings. In two dimensions, many scaling limits are moreover expected
to   be  \emph{conformally   invariant},  \emph{i.e.}   invariant  (or
covariant) under the  action of conformal maps.  An archetypal example
is  that of  Brownian motion,  which is  the scaling  limit of  simple
random walk and is indeed conformally invariant in dimension two.

\subsection{Percolation}
\label{sec-1-1}

There are very few models for  which convergence to a scaling limit is
known.  The  easiest  to describe  is  (Bernoulli)  \emph{percolation}
\cite{grimmett:book,kesten:book}. It is described as follows: starting
with  the  honeycomb lattice  on  the  plane,  color each  face  (each
hexagon) black with probability $p\in(0,1)$ and white with probability
$1-p$, independently  of one  another. One is  then interested  in the
existence  of  unbounded chains  of  adjacent  hexagons that  are  all
colored black. It is not difficult  to show that this is almost surely
the case if $p$ is close enough to $1$, and almost surely not the case
if it is small enough; there  is a critical value $p_c$ separating the
two  possible behaviors.  In  this  particular case,  it  is a  result
similar to Kesten's celebrated theorem \cite{kesten:pc12} that in fact
$p_c=1/2$.

An equivalent  way to  see this model  is to consider  it as  a random
coloring of the  vertices of the triangular lattice, which  is dual to
the honeycomb lattice. The model  itself is therefore usually referred
to as \emph{site percolation}.

\subsection{Cardy's formula}
\label{sec-1-2}

It  is one  of the  most striking  recent results  in the  domain that
critical  site-percolation on  the  triangular lattice  has a  scaling
limit.   One   precise   statement   of   this   uses   \emph{crossing
  probabilities}. Let $\Omega$ be  a simply connected, bounded, smooth
domain in  the complex  plane and  let $(a,b,c,d)$  be a  quadruple of
boundary points  of $\Omega$ ---  these data form  a \emph{topological
  rectangle}. Consider  critical site-percolation on  the intersection
of $\Omega$ with the triangular lattice with mesh $\delta$.

\begin{theorem}       [Smirnov        \cite{smirnov:percol}]       Let
  $C_\delta(\Omega,a,b,c,d)$ be the event that there exists a chain of
  black vertices  in $\Omega$, connecting  the boundary arcs  $ab$ and
  $cd$ --- this is called a \emph{crossing event}. Then,
  \begin{enumerate}
  \item   As    $\delta\to0$,   the    probability   of    the   event
    $C_\delta(\Omega,a,b,c,d)$   converges  to   a  limit   $\mathfrak
    f(\Omega,a,b,c,d) \in (0,1);$
  \item Moreover, the limit is conformally invariant: if $\Phi: \Omega
    \to \Phi(\Omega)$ is a conformal  map that extends continuously to
    the  boundary   of  $\Omega$,  then   \[\mathfrak  f(\Phi(\Omega),
    \Phi(a),     \Phi(b),     \Phi(c),    \Phi(d))     =     \mathfrak
    f(\Omega,a,b,c,d).\]
  \end{enumerate}
\end{theorem}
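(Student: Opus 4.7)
The plan is to follow Smirnov's approach, whose core is the construction of a discrete observable that is approximately analytic and whose scaling limit can be identified explicitly. First I would reduce to a three-marked-point setup: take a conformal triangle with boundary points $a_1, a_2, a_3$, and for each interior point $z$ and each $k \in \{1,2,3\}$, define $H_k^\delta(z)$ to be the probability that a cluster-type path in black from the arc $a_{k-1}a_{k+1}$ (avoiding $a_k$) separates $z$ from $a_k$. Once the scaling-limit behavior of these three observables is understood, the four-point crossing probability $\mathfrak{f}(\Omega,a,b,c,d)$ is recovered by standard manipulations (e.g.\ expressing a rectangle crossing as a combination of the triangular observables after degenerating one marked point).

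The heart of the argument is to show that the complex-valued function
$F_\delta(z) = H_1^\delta(z) + \tau\, H_2^\delta(z) + \tau^2 H_3^\delta(z)$, with $\tau = e^{2\pi i/3}$, satisfies discrete Cauchy-Riemann relations up to an $o(1)$ error as $\delta\to 0$. This is proved via the \emph{color-switching} lemma: around each edge or elementary contour of the lattice, one compares how the events defining $H_k^\delta$ change under local color resamplings, and the contributions cancel pairwise thanks to a three-fold symmetry of the lattice. Together with the Russo--Seymour--Welsh a priori bounds, which provide uniform Hölder regularity of the $H_k^\delta$ and thus equicontinuity, this lets one extract subsequential limits $h_k$ on $\Omega$ that are honest holomorphic functions in the right combinations.

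To identify these limits, I would use their boundary behavior: on each of the three arcs bounding $\Omega$, one of the $h_k$ vanishes and the others take prescribed boundary values that follow from elementary one-dimensional crossing arguments. This boundary-value problem has a unique solution. Conformally mapping $\Omega$ to an equilateral triangle $T$ with the $a_k$ sent to its vertices, the symmetry of the problem under rotations of order three forces $h_k$ to be affine functions of the Cartesian coordinates on $T$. This both yields convergence of the full sequence (not just subsequences) and furnishes conformal invariance, since the whole construction was transported by an arbitrary conformal map. Reading off $\mathfrak{f}$ from the $h_k$ produces Cardy's explicit formula.

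The main obstacle is the color-switching lemma and the accompanying proof that the discrete Cauchy-Riemann error really is $o(1)$: this step is where the combinatorial details of site percolation on the triangular lattice intervene in an essential way, through the fact that exactly three monochromatic hexagons meet at every vertex, giving a perfect $\mathbb{Z}/3$ symmetry. Everything else, notably the a priori estimates and the boundary identification of the limit, is comparatively robust; it is precisely this step that one will need to revisit in order to transfer the argument to the mesoscopic triangulations considered in the sequel.
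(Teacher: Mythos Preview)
Your outline is correct and is essentially Smirnov's original argument, which the paper cites here without reproving and then adapts almost verbatim in Section~\ref{sec-3}: the same observable $H=H_a+\tau H_b+\tau^2 H_c$, RSW-based equicontinuity for compactness, the color-switching identity for approximate analyticity, and identification of the limit via boundary values. One minor remark: the four-point crossing probability is read off simply by evaluating the limiting observable at the fourth boundary point $d$, not by ``degenerating one marked point'' as you suggest.
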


The  Riemann  mapping  theorem   ensures  that  any  such  topological
rectangle can be  conformally mapped to a rectangle  $\mathfrak R_q :=
[0,q] \times [0,1]$, seen as a subset  of the complex plane, in such a
way that  the boundary  arcs $ab$  and $cd$ are  sent to  the vertical
edges  of $\mathfrak  R_q$;  the value  of $q$  is  prescribed by  the
topological rectangle. The limit $\mathfrak  f$ can then be written in
terms  of $q$:  there is  a function  $\mathfrak C  : (0,+\infty)  \to
(0,1)$   satisfying   \[\mathfrak    f(\Omega,a,b,c,d)   =   \mathfrak
f(\mathfrak  R_q, q,q+i,i,0)  =: \mathfrak  C(q).\] The  existence and
value  of the  function $\mathfrak  C$ were  first predicted  by Cardy
\cite{cardy:formula},  and   the  value  is  known   as  \emph{Cardy's
  formula}.

As a consequence of this theorem,  one gets the existence of a scaling
limit  for macroscopic  percolation  interfaces,  described using  SLE
processes,  from   which  one  can   then  derive  the   existence  of
\emph{critical  exponents}:  for   instance  \cite{smirnov:exps},  the
probability that  the origin is  in an infinite component  scales like
$(p-p_c)^{5/36 + o(1)}$ as $p  \downarrow p_c$, and the probability at
the critical point that the cluster  of the origin has radius at least
$R$ behaves like $R^{-5/48+o(1)}$.

\subsection{Universality}
\label{sec-1-3}

It  is tempting  to  wonder  how general  the  statement of  Smirnov's
theorem really is. The  \emph{qualitative} features of percolation are
the  same  in  many  two-dimensional   lattices:  there  always  is  a
non-trivial critical point, divergence  of the correlation length, and
in  many   cases  uniform   upper  and   lower  bounds   for  crossing
probabilities at  the critical point  (this is usually referred  to as
\emph{Russo-Seymour-Welsh theory} \cite{russo:rsw,seymour:rsw}).

Physicists expect  more quantitative  similarities: the  conjecture of
\emph{universality} states  that all  Bernoulli percolation  models on
planar  lattices   will  exhibit  the  same   critical  exponents  and
essentially  the same  scaling limit  (even  though the  value of  the
critical  point $p_c$  itself depends  on the  lattice). The  question
remains completely open  from the mathematical point of  view, and the
triangular lattice is so far the only  one on which a scaling limit is
known to exist.

\subsection{Statement of the results}
\label{sec-1-4}

The  main  result of  the  present  paper  is a  partial  universality
statement,    for    \emph{mesoscopic   triangulations}    \emph{i.e.}
triangulations  of the  plane with  a local  structure similar  to the
triangular  lattice,  but  with  a  longer  range  one  that  is  more
arbitrary.

Let $\mathcal T$ be a periodic triangulation of the plane, embedded in
the plane  in a fixed way.  Rescale $\mathcal T$ to  have lattice mesh
$\delta$, and replace each of its faces with a triangle of side length
$N$ taken  from the triangular  lattice, welding such  triangles along
the  edges  of the  initial  triangulation.  This produces  a  lattice
$\mathcal  T_{\delta,N}$,   which  has  two   characteristic  lengths,
$\delta$ and $\delta/N$. $\delta$ is understood to be small, but large
with  respect to  the  microscopic scale  $\delta/N$,  hence the  name
\emph{mesoscopic}.

\begin{theorem}
  \label{thm:intro}
  Assuming that  $\mathcal T\!\!$ satisfies assumption  1 (see below),
  there  exists  a  constant  $c>0$ such  that,  as  $\delta\to0$  and
  $N\to\infty$ jointly,  subject to the constraint  $N > \delta^{-c}$,
  critical   percolation   on   the   mesoscopic   lattice   $\mathcal
  T_{\delta,N}$  has  the  same  scaling limit  as  that  of  critical
  percolation  on   the  triangular  lattice,  up   to  a  real-linear
  transformation depending only on  the initial embedding of $\mathcal
  T$.
\end{theorem}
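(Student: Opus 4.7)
The plan is to adapt Smirnov's argument, exploiting the fact that inside each face of the coarse triangulation $\mathcal{T}$ the lattice $\mathcal{T}_{\delta,N}$ is literally a piece of the standard triangular lattice, so that Smirnov's machinery applies without modification there. Concretely, I would work with the three color-switching preholomorphic functions $G^\delta_1,G^\delta_2,G^\delta_3$, one for each orientation of the hexagonal dual, whose real parts carry the crossing probabilities and which satisfy the vanishing of certain discrete contour integrals around elementary hexagons. Inside each face $T$ of $\mathcal{T}$, these three observables are well-defined and exactly discretely holomorphic in the usual sense. The discrete Cauchy--Riemann identity can only fail at microscopic hexagons straddling a welding edge of $\mathcal{T}$, where the orientation of the underlying triangular lattice changes from one side to the other; all of the defect is therefore concentrated on a $1$-dimensional skeleton.

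The main analytic task is to show that this welding defect vanishes in the scaling limit. Since the microscopic lattice is everywhere pure triangular, the full RSW theory is available at every scale between $\delta/N$ and $\delta$ with constants independent of $\delta$ and $N$ (this is where Assumption~1 on $\mathcal{T}$ will presumably enter, through a bounded-geometry requirement on its faces). From RSW one extracts uniform Hölder regularity of the $G^\delta_i$ by a standard Aizenman--Burchard-type argument. I then plan to show that, tested against any smooth function, the accumulated defect of the discrete Cauchy--Riemann identity along the $O(\delta^{-1})$ worth of welding skeleton per unit area tends to $0$: heuristically, each welding edge of macroscopic length $\delta$ carries $N$ microscopic hexagons of defect, but the uniform Hölder bound forces successive defects along the edge to largely telescope, leaving a residue which the hypothesis $N > \delta^{-c}$ makes summably small as $\delta\to 0$. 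Tightness then yields subsequential limits $G_1,G_2,G_3$ which are holomorphic off the welding skeleton, continuous across it, and hence globally holomorphic.

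The last step is to interpret these limits as Cardy's formula up to a real-linear change of coordinates. The triangular-lattice piece inside each face $T$ of $\mathcal{T}$ is the image of an equilateral model $T^{\mathrm{equi}}$ under some affine map $\psi_T$, and it is with respect to the complex structure pulled back from $T^{\mathrm{equi}}$ that the $G^\delta_i$ are naturally holomorphic. Because $\mathcal{T}$ is periodic, the family $\{\psi_T\}$ is compatible under the translation action and assembles into a single real-linear map $A:\mathbb{C}\to\mathbb{C}$ depending only on the embedding of $\mathcal{T}$; this map encodes the discrepancy between the given planar embedding and the flat structure that the triangular lattice imposes on each face. After precomposing with $A^{-1}$, Smirnov's identification of the limiting crossing probabilities with Cardy's formula applies verbatim, which is exactly the statement of the theorem. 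The principal obstacle I anticipate is the welding-defect estimate in the middle paragraph: quantifying how discrete holomorphicity degrades across a change of lattice orientation, and in particular pinning down the lower bound $N > \delta^{-c}$ precise enough to absorb that degradation against the available Hölder regularity.
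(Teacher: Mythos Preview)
Your overall architecture --- equicontinuity from RSW, extraction of subsequential limits, identification via approximate discrete holomorphicity, then a linear change of variables to recover Cardy's formula --- matches the paper's. But the identification step contains a genuine gap.

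The claim that the per-face affine maps $\psi_T$ ``assemble into a single real-linear map $A:\mathbb C\to\mathbb C$'' is false. Periodicity of $\mathcal T$ only tells you that the family $\{\psi_T\}$ is periodic; within one period the faces have different shapes, hence the linear parts of the $\psi_T$ genuinely differ (look at Figure~\ref{fig:tri}). What they assemble into is the doubly periodic \emph{quasi-conformal} map $\Phi_\delta$ of Section~\ref{sec-2-2}, with Beltrami coefficient $\mu_f$ on each face $f$. Only the \emph{limit} $\Phi_0=\lim_{\delta\to 0}\Phi_\delta$ is real-linear, and that limit is the map in the statement of the theorem; at positive $\delta$ one cannot replace $\Phi_\delta$ by a single linear map.

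This undermines the ``defect concentrated on the $1$-dimensional welding skeleton'' picture. In any globally defined coordinate system --- the given embedding, or the $\Phi_\delta$-image --- the microscopic triangles are not all equilateral, so Smirnov's bracket
\[
\partial_{e^*}\Phi+\tau\,\partial_{(e')^*}\Phi+\tau^2\,\partial_{(e'')^*}\Phi
\]
does not vanish in the bulk of a cell. The paper's mechanism is accordingly different from a telescoping estimate along edges: one writes the discrete contour integral as $I=2\sum_e P_a(e)\cdot[\text{bracket}]+O((\delta/N)^\eta)$, straightens one cell $c$ at a time by an affine map so that $\Phi$ becomes \emph{holomorphic} there, and uses the distortion theorem to bound $|\Phi''|=O(1/d)$ with $d=\operatorname{dist}(\,\cdot\,,\partial c)$. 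A Taylor expansion of the bracket gives $O((\delta/N)^2/d)$ per micro-face, summing to $O(\delta\log N)$ over a cell; multiplying by the three-arm bound $P_a(e)=O((\delta/N)^{\eta'})$ and summing over the $O(\delta^{-2})$ cells is what produces the constraint $N>\delta^{-c}$. None of this is visible from a skeleton count, and a H\"older/telescoping heuristic alone will not recover it.

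A smaller point: Assumption~1 is not a bounded-geometry hypothesis on the faces of $\mathcal T$; it \emph{is} the uniform RSW estimate at all scales, including scales $\gg\delta$ where the lattice is no longer locally a piece of the triangular lattice. The RSW you get for free inside a single cell only yields $\partial_e H=O(N^{-\eta})$ rather than $O((\delta/N)^\eta)$, and in particular does not by itself give the equicontinuity across cells needed for the compactness step (cf.\ Section~\ref{sec-5}).
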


The  precise  way that  the  linear  transformation appearing  in  the
statement is related  to $\mathcal T$ is rather explicit,  and will be
presented  in  detail  below.  In  particular,  one  can  characterize
embeddings for which it is the  identity map, and for such embeddings,
the conclusion of the theorem is that the scaling limit is exactly the
same as for the triangular lattice.

\subsection{Plan of the article}
\label{sec-1-5}

In the next  section, we start by giving some  background and notation
on planar triangulations, subdivisions,  and related Riemann surfaces,
to be able  to give a precise  statement of our main  results. Then in
section~\ref{sec-3}    we    first    investigate    the    case    of
\emph{macroscopic}   triangulations   (\emph{i.e.}   $\delta$   fixed,
$N\to\infty$). In  section~\ref{sec-4} we  extend the argument  to the
mesoscopic case.  In the  section~\ref{sec-5}, we  make a  few remarks
about  assumption 1  and possible  relaxations of  it. Two  appendices
contain a characterization of quasi-conformal maps similar to Morera's
theorem, and a few details about computational aspects.

\section{Riemann surfaces from discrete structures}
\label{sec-2}

\subsection{Graphs on the torus}
\label{sec-2-1}

Let $\mathbb  T$ be the flat  torus $\mathbb R^2 /  \mathbb Z^2$, seen
for the  moment as  a topological  space. Let $G$  be a  finite graph,
embedded in $\mathbb T$ in such a  way that its edges do not intersect
away from their endpoints and that its complement is composed of faces
homeomorphic to  disks. We will  always implicitly assume that  $G$ is
\emph{simple} (in the sense that there is at most one edge between any
pair of  vertices) and has  no \emph{loops} (\emph{i.e.} edges  from a
vertex to itself).

Given such a graph, the \emph{degree} of a vertex is simply the number
of neighbors it has in $G$, and the  degree of a face is the number of
edge sides it has on its boundary  (the same edge can count twice, for
instance if $G$ has a leaf). If  all the faces have degree $3$, $G$ is
called a \emph{triangulation} of the torus.

Given  a triangulation  of  the  torus, one  can  construct a  Riemann
surface  of genus  $1$ by  ``gluing together  equilateral triangles''.
More formally, one  can give $\mathbb T$ a  complex manifold structure
by  fixing  a homeomorphism  between  each  face and  the  equilateral
triangle of  vertices $(1,e^{2\pi i/3},e^{-2\pi i/3})$  in the complex
plane  and extending  it to  a proper  atlas by  using the  reflection
principle  along the  edges of  the triangulation.  We will  denote by
$M_G$ the Riemann surface obtained this way.

\begin{remark}
  There are  of course many other  ways to create a  complex structure
  out of  a discrete  one (circle packings  and branched  coverings to
  cite  two;  see   \emph{e.g.}~\cite{LZ:graphs}).  In  general,  they
  provide a \emph{different} Riemann surface, which is not conformally
  equivalent to $M_G$. While the one  we chose here might seem like an
  arbitrary choice, our  result would be the same  for most reasonable
  choices; indeed, asymptotically  as the triangles are  more and more
  refined, the difference between various constructions then vanishes.
\end{remark}

\subsection{Construction \emph{via} quasi-conformal maps}
\label{sec-2-2}

\begin{figure}[ht!]
  \centering
  \includegraphics[width=\hsize]{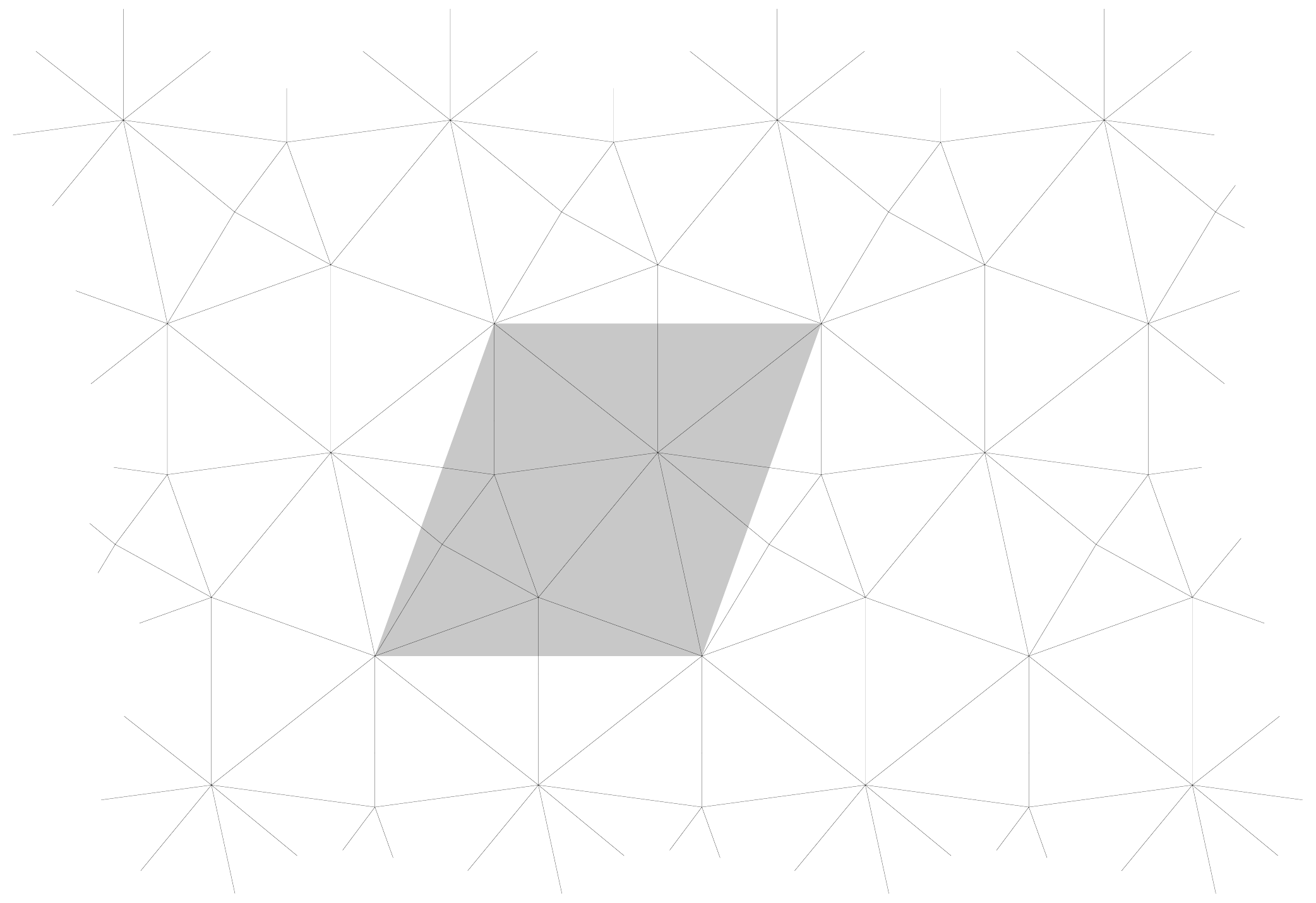}
  \caption{A  periodic  triangulation,  which   can  be  seen  as  the
    universal cover  of a triangulation of  the torus. In gray  is one
    fundamental domain.}
  \label{fig:tri}
\end{figure}

A (perhaps)  more explicit construction  of the Riemann  surface $M_G$
associated to a  triangulation goes as follows. Pick  any embedding of
$G$  in $\mathbb  T$  such  that edges  are  mapped  to straight  line
segments. Then, lift  the embedding to the whole  plane $\mathbb R^2$,
to get  a doubly  periodic triangulation of  it. This  doubly periodic
graph has a fundamental domain, which is a parallelogram; the shape of
this parallelogram can be chosen  arbitrarily (it correspond to giving
an additional structure to the torus $\mathbb T$), but for convenience
assume  that one  of its  edges is  the unit  segment along  the first
coordinate axis.

Identify $\mathbb R^2$ to the complex  plane $\mathbb C$; and for each
triangular  face $f$  thus obtained,  let
\begin{equation}
  \mu_f := - \frac {a+\tau b+\tau^2 c}  {\bar a + \tau \bar b + \tau^2
    \bar c}
  \label{eq:mu_f}
\end{equation}
(where  $a$, $b$  and $c$  are  the vertices  of $f$,  ordered in  the
positive direction  around the face  and seen as complex  numbers, and
where $\tau = e^{2\pi i/3}$).

Notice that  $\mu_f$ vanishes  if and  only if  $f$ is  an equilateral
triangle; and  that it  is invariant under  cyclic relabelling  of the
vertices,  so   that  it   is  well-defined.  The   strict  inequality
$|\mu_f|<1$  is  satisfied  as  soon  as none  of  the  faces  of  the
triangulation is degenerate, which we will implicitly assume. In fact,
it is  easy to check that  if $\varphi_f$ is any  real-linear function
mapping $f$ into an  equilateral triangle, and preserving orientation,
then it  has constant Beltrami coefficient
\begin{equation}
  \frac {\partial_{\bar z} \varphi_f} {\partial_z \varphi_f} = \mu_f.
\end{equation}

\begin{figure}[ht!]
  \centering
  \includegraphics[width=\hsize]{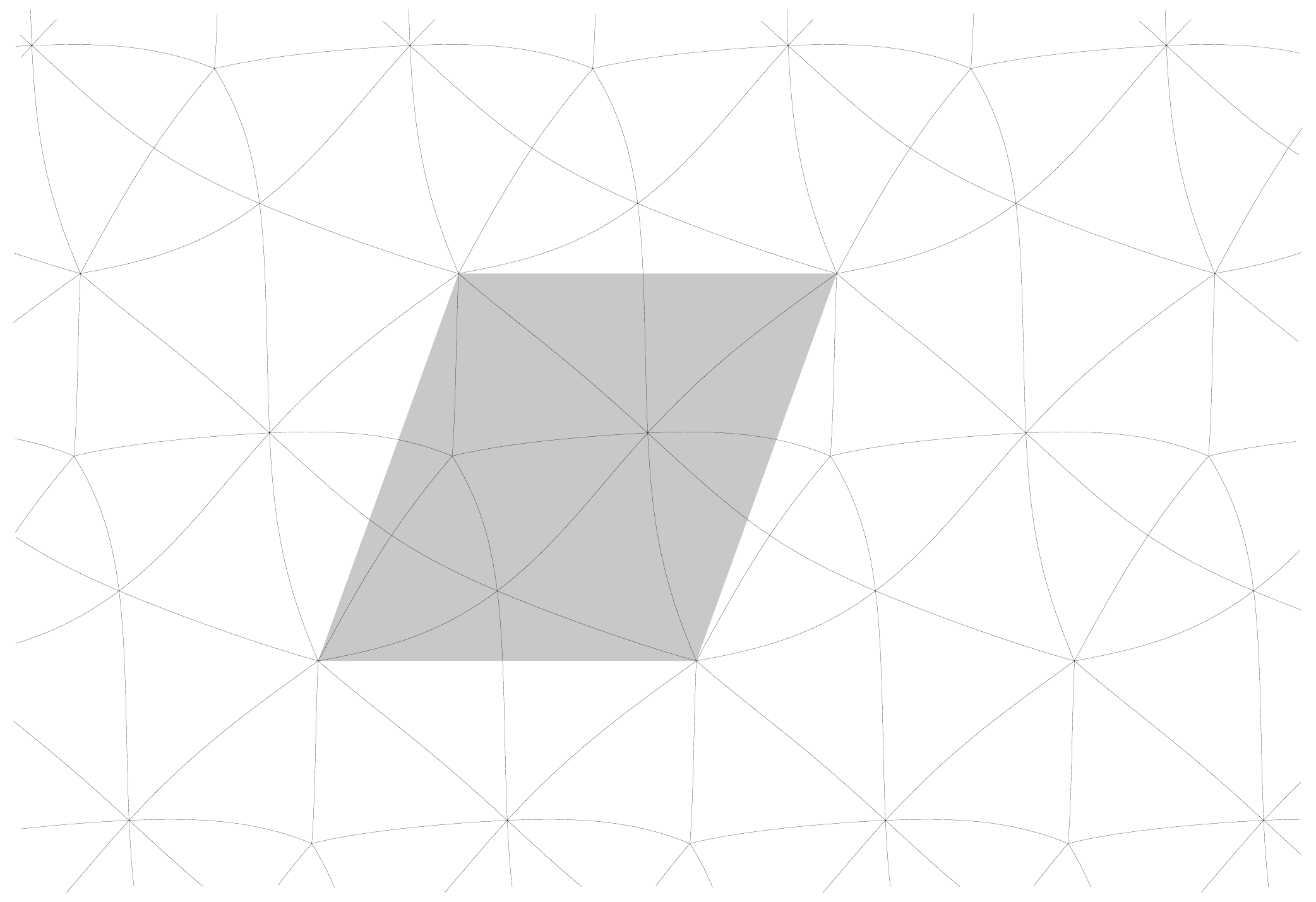}
  \caption{The image  of the triangulation of  Figure~\ref{fig:tri} by
    the corresponding bi-periodic quasi-conformal map $\varphi_G$. The
    gray  fundamental  domain  has  vertices $0$,  $1$,  $\tau_G$  and
    $1+\tau_G$.}
  \label{fig:tri_qc}
\end{figure}

Now, let $\mu  : \mathbb C \to  \mathbb C$ be equal to  $\mu_f$ in the
interior of each face $f$, and to  $0$ along edges and at vertices. By
periodicity, $\|\mu\|_\infty<1$, so by  the measurable Riemann mapping
theorem  (for this  and  more background  on  quasi-conformal maps  in
general,  cf.~\cite{ahlfors:qc}) there  exists  a quasi-conformal  map
$\varphi_G : \mathbb C \to \mathbb C$ with Beltrami coefficient $\mu$.
This map  is unique if  one adds the conditions  that $\varphi_G(0)=0$
and $\varphi_G(1)=1$.  Moreover, it  preserves the periodicity  of the
initial lattice: there exists a  complex number $\tau_G \in \mathbb H$
such that for any integers $x$ and $y$, and any complex $z$,
\begin{equation}
  \varphi_G (z+x+iy) = \varphi_G (z) + x + \tau_G y.
\end{equation}
What this implies is that  $\varphi_G$ passes to the quotient, leading
to a map
\begin{equation}
  \Phi_G :  \mathbb R^2  / \mathbb Z^2  \to \mathbb C  / (\mathbb  Z +
  \tau_G \mathbb Z) =: \mathbb T_G.
\end{equation}

The way that  $\mu_f$ is chosen, corresponding to the  linear map from
$f$ to an equilateral triangle, directly implies the following result,
which we state as a lemma for easier reference:

\begin{lemma}
  \label{lem:qc}
  $\mathbb T_G$ is conformally equivalent to $M_G$.
\end{lemma}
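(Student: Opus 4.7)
\textit{Plan.} The natural candidate for the conformal equivalence is the map $\Phi_G$ itself, with $\mathbb{T}$ carrying the complex structure of $M_G$. It is already a homeomorphism (as the quotient of the QC homeomorphism $\varphi_G$), so what remains is to show that, read in local charts of the atlases of $M_G$ and $\mathbb{T}_G$, it is holomorphic.

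\textit{Inside a face.} The chart on $M_G$ over the interior of a face $f$ is any orientation-preserving real-linear map $L_f$ from $f$ onto the standard equilateral triangle $T$; by the observation following \eqref{eq:mu_f}, any such $L_f$ has constant Beltrami coefficient $\mu_f$, matching that of $\varphi_G|_f$. The chart on $\mathbb{T}_G$ is simply the ambient coordinate of $\mathbb{C}$, so the transition representing $\Phi_G$ is $\varphi_G \circ L_f^{-1}$; being a composition of two QC maps with the same Beltrami coefficient, it has vanishing Beltrami coefficient on the interior of $T$, hence is holomorphic there.

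\textit{Edges and vertices.} Across an edge $e$, the $M_G$-chart is obtained by Schwarz reflection, yielding a rhombus that glues the triangles associated to the two adjacent faces. In this chart the previous step makes $\Phi_G$ holomorphic on each half; it is continuous across $e$ by continuity of $\varphi_G$ and bounded, so Morera's theorem gives holomorphicity through $e$. At a vertex $v$, the chart uniformizes the conical singularity coming from gluing the equilateral triangles around $v$; in those coordinates $\Phi_G$ is holomorphic on the punctured neighborhood by the previous arguments and bounded, so it extends through the vertex by Riemann's removable singularity theorem.

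The only mildly delicate step is the one at a vertex of degree $\neq 6$, but it is entirely classical; the heart of the lemma is really the face-interior computation, which is why the particular Beltrami coefficient $\mu_f$ was chosen in \eqref{eq:mu_f} to match exactly that of the affine map to an equilateral triangle. Alternatively, one may short-circuit the edge and vertex discussion by noting that $\Phi_G$ is globally quasi-conformal with Beltrami coefficient vanishing off the $1$-skeleton (a set of two-dimensional Lebesgue measure zero in $M_G$-charts) and is therefore conformal.
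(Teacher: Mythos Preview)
Your argument is correct and is precisely the elaboration the paper has in mind: the paper does not give a separate proof but simply notes that the lemma follows ``directly'' from the choice of $\mu_f$ as the Beltrami coefficient of the affine map to an equilateral triangle, which is exactly your face-interior computation. Your treatment of edges and vertices (or the alternative global argument via a.e.\ vanishing Beltrami coefficient) fills in the routine details the paper omits.
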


One can also look at the image of the initial triangulation by the map
$\varphi_G$ (see  Figure~\ref{fig:tri_qc}). This provides  a canonical
embedding of $G$ in  the plane; we will see later that  it is for this
choice  of embedding  that the  real-linear  map in  the statement  of
Theorem~\ref{thm:intro}   it  the   identity.  Notice   that  if   the
triangulation is embedded with this  fundamental domain to start with,
then the difference $\varphi_G(z)-z$ is uniformly bounded.

\subsection{Subdivisions}
\label{sec-2-3}

Let  $T =  (V,E)$ be  a  triangulation of  the torus.  We construct  a
subdivision $T'$  of $T$, as follows  (see Figure~\ref{fig:subd}). The
set of  vertices of $T'$  is $V(T') = V  \cup E$; two  vertices $v_1$,
$v_2\in V(T')$ are adjacent if and only if
\begin{itemize}
\item one  is a  vertex of $T$  and the  other one is  an edge  of $T$
  incident to that vertex; or,
\item both are edges of $T$, incident  to a common vertex and a common
  face.
\end{itemize}

\begin{figure}[ht!]
  \centering
  \includegraphics[width=\hsize]{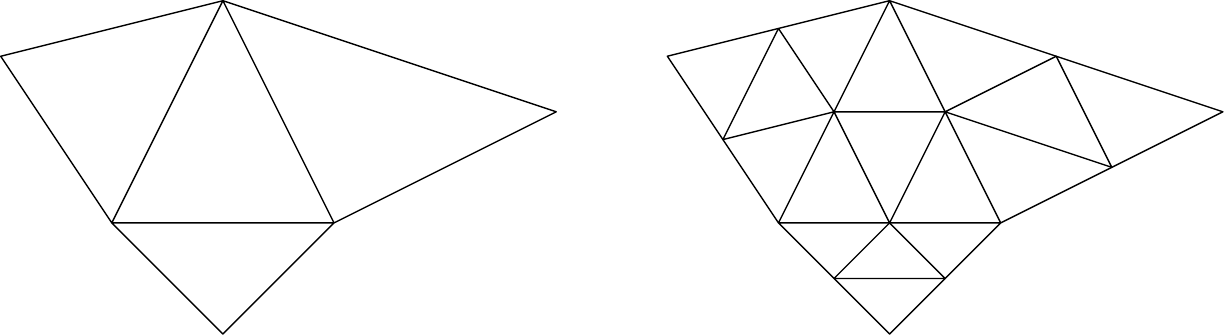}
  \caption{Subdivision of  a triangulation (left: four  adjacent faces
    of $T$; right: the corresponding faces of $T'$)}
  \label{fig:subd}
\end{figure}

More intuitively,  this amounts to replacing  each face of $T$  by its
subdivision into $4$ triangles, with a new vertex located on each edge
of   $T$.    This   construction    can   then   be    iterated   (see
Figure~\ref{fig:subd_n});   we   will    denote   by   $T^{(n)}$   the
triangulation obtained from $T$ after $n$ successive subdivisions. The
operation of subdivision is \emph{a  priori} combinatorial, but in the
case of embedded triangulation, we will always embed the result of the
subdivision in such a way that every  new vertex is at the midpoint of
the  corresponding (embedded)  edge.  This in  turn  implies that  the
length of every  edge of the subdivided  graph is one half  of that of
some edge of the initial one.

A key remark  for what follows is that the  structure of $T^{(n)}$ can
be described  in a  simple way starting  from that of  $T$: it  can be
obtained by replacing each face of  $T$ with a large triangular region
of side length $2^n$ of the regular triangular lattice. In particular,
all  the  newly  added  vertices   have  degree  equal  to  $6$.  More
importantly, the complex structure  introduced in the previous section
is not affected by the operation:

\begin{figure}[ht!]
  \centering
  \includegraphics[width=\hsize]{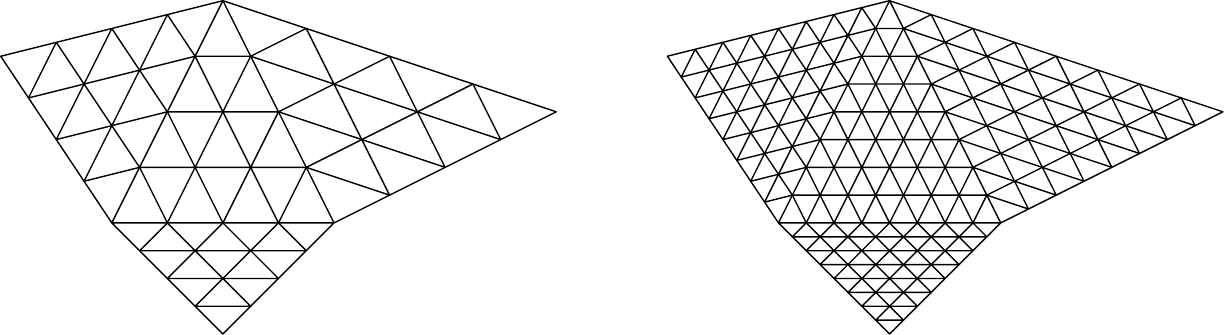}
  \caption{Subsequent   subdivisions    of   the    triangulation   in
    Figure~\ref{fig:subd}}
  \label{fig:subd_n}
\end{figure}

\begin{lemma}
  The Riemann surfaces $M_T$ and $M_{T'}$ are conformally equivalent.
\end{lemma}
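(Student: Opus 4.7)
The plan is to build an explicit map $\psi \colon M_{T'} \to M_T$ using the elementary geometric fact that four equilateral triangles of side $1$, glued according to the midpoint-subdivision pattern, form a single equilateral triangle of side $2$. Concretely, for each face $f$ of $T$, the four sub-faces of $T'$ that replace $f$ carry, via the charts defining $M_{T'}$, an identification with a side-$1$ equilateral triangle subdivided at its edge midpoints; the ``scale by $2$'' affine map then sends this subdivided triangle onto the side-$2$ equilateral triangle that $M_T$ attaches to $f$. Applying this face by face produces a continuous map $\psi$, and the task is then to verify that it is a conformal equivalence.

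I would check, in order: (i) consistency across edges of $T$; (ii) holomorphicity on the interior of faces and across the interior of edges of $T$; (iii) holomorphicity at each new midpoint vertex $m \in E(T) \subset V(T')$; (iv) holomorphicity at each original vertex $v \in V(T)$. Step (i) is immediate because on each side of an edge $e$ of $T$ the scale-by-$2$ map is the same. For (ii), $\psi$ is linear on each face, and the standard reflection-principle chart of $M_T$ around an interior edge point coincides, after a scaling, with a reflection-principle chart of $M_{T'}$ built from four small triangles, on which $\psi$ again acts as a linear scaling. For (iii), a direct count from the definition of $T'$ gives $\deg_{T'}(m) = 6$, so $m$ is a regular point of $M_{T'}$ (total angle $2\pi$); on the $M_T$ side it lies in the interior of an edge, hence is also regular, and locally $\psi$ is simply $z \mapsto 2z$.

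The one step that requires care, and which I expect to be the real content of the proof, is (iv): the behaviour at an original vertex $v$. Both $M_T$ and $M_{T'}$ typically carry a conical singularity at $v$, so one must verify that the uniformization of the cone is compatible with $\psi$. The key observation is that $\deg_{T'}(v) = \deg_T(v)$, since by the definition of the subdivision the neighbours of $v$ in $T'$ are exactly the midpoints of the edges of $T$ incident to $v$; consequently the total angle at $v$ is the same $\alpha_v = \deg_T(v) \cdot \pi/3$ in both surfaces. In the standard uniformizing chart $w = z^{2\pi/\alpha_v}$ near $v$, the map $\psi$ reads $w \mapsto 2^{2\pi/\alpha_v}\, w$, a non-degenerate holomorphic map, so $\psi$ extends holomorphically across $v$. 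Collecting (i)--(iv) shows that $\psi$ is a holomorphic bijection $M_{T'} \to M_T$, which is the conformal equivalence claimed.
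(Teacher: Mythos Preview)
Your proposal is correct and coincides with the paper's primary approach: the paper simply states that ``an explicit conformal map between the two can be constructed from the map $z \mapsto 2z$'', and what you have written is exactly a careful verification of this, including the one step the paper leaves implicit (matching cone angles at $v \in V(T)$ via the observation $\deg_{T'}(v) = \deg_T(v)$).

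It is worth noting that the paper also offers a short alternative argument through Lemma~\ref{lem:qc}: since the four sub-triangles lying inside a face $f$ of $T$ are all similar to $f$ itself, the face-wise Beltrami coefficient $\mu$ of~\eqref{eq:mu_f} is unchanged under subdivision, hence so are the quasi-conformal map $\varphi_G$ and the modulus $\tau_G$, giving $M_T \cong \mathbb{T}_T = \mathbb{T}_{T'} \cong M_{T'}$. This second route bypasses your local vertex-by-vertex check at the cost of invoking the measurable Riemann mapping theorem; your argument is more self-contained, while the paper's alternative makes the invariance under \emph{iterated} subdivision (and indeed under replacing each face by a side-$N$ triangular patch for arbitrary $N$) immediately transparent.
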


\begin{proof}
  This  is clear  from  the  construction of  $M_T$  and $M_{T'}$;  an
  explicit conformal map  between the two can be  constructed from the
  map $z  \mapsto 2z$  in the  complex plane.  Alternatively, noticing
  that all the triangles contained in one of the original faces of the
  triangulation  have the  same shape,  and  thus the  same value  for
  $\mu_f$,  the  conformal  equivalence  is a  direct  consequence  of
  Lemma~\ref{lem:qc}.
\end{proof}

\begin{remark}
  The fact  that the subdivisions  that we  define here are  dyadic is
  mainly  for  ease  of  notation;  one could  as  well  replace  each
  triangular face of  $T$ with a triangular region of  side length $N$
  for other values  of $N$. This will be implied  when considering the
  refined triangulation $\mathcal T_{\delta,N}$.
\end{remark}

\subsection{Other topologies}
\label{sec-2-4}

The same  constructions as above  can be performed in  other settings;
most useful for  us, but postponed to here because  they necessitate a
little  more  notation,  is  the  case  of  approximations  of  simply
connected  domain in  the  complex plane,  or  subdivisions of  planar
triangulations with a boundary.

Let $G$ be a planar graph with  one marked face, embedded in the plane
in such a way that the marked  face is the only unbounded one; we will
refer to  that face as  the outer face of  $G$. Assume that  all inner
faces of $G$ are triangles; to  avoid degenerate cases, we will always
also assume that the outer face of $G$ has at least $4$ vertices. This
last restriction is not an essential  one: we are going to investigate
finer and  finer subdivisions  of a  fixed graph,  and after  one such
subdivision the outer  face cannot have less than $6$  vertices in any
case,  so up  to using  the  result of  the first  subdivision as  the
initial graph, the condition can be assumed to be satisfied.

By   welding  together   equilateral   triangles   according  to   the
combinatorics of the  inner faces of $G$, one can  construct a complex
structure in the  complement of the outer face.  Accordingly, to every
choice of  $4$ boundary vertices  $(a,b,c,d)$ of $G$  with appropriate
ordering, there  corresponds a unique modulus  $\rho = \rho_G(a,b,c,d)
\in \mathbb  R_+$ and a  unique conformal map  $\Psi_{G;a,b,c,d}$ from
the  complement  of  the  outer   face  (equipped  with  this  complex
structure) to  the rectangle  $\mathfrak R_\rho  = [0,\rho_G(a,b,c,d)]
\times [0,1] \subset \mathbb R^2 \simeq \mathbb C$, mapping $a$ (resp.
$b$, $c$, $d$) to $\rho$ (resp. $\rho + i$, $i$, $0$).

As  before,  the  modulus  $\rho_G(a,b,c,d)$ is  invariant  under  the
subdivision      described      earlier:
\begin{equation}
  \rho_G(a,b,c,d) = \rho_{G'}(a,b,c,d).
\end{equation}
In addition,  it is easy  to check that under  successive refinements,
the   mesh    of   the    image   of    $G^{(n)}$   under    $\Psi   =
\Psi_{G^{(n)};a,b,c,d}$, \emph{i.e.} the largest diameter of the image
of a  face of $G^{(n)}$, goes  to $0$ exponentially fast  in $n$. That
makes the setup right for the  study of scaling limits. Yet as before,
seen as a map between domains of  the complex plane, the map $\Psi$ is
quasi-conformal and its  Beltrami derivative is constant  on each face
of the triangulation, given by the same formula as above.

We now  have all the necessary  notation to precisely state  our first
main result. Recall that $\mathfrak  C(\rho)$ is the limiting crossing
probability for  critical site-percolation on the  triangular lattice,
as given by Cardy's formula.

\begin{theorem}
  \label{thm:cardy}
  With  the previous  setup,  consider  site-percolation at  parameter
  $1/2$ on the vertices of $G^{(n)}$,  and let $C_n$ be the event that
  there  exists  a path  of  open  vertices  crossing it  between  the
  boundary arcs  $(ab)$ and $(cd)$.  Then as $n \to  \infty$, \[P[C_n]
  \to \mathfrak C(\rho_G(a,b,c,d)).\]
\end{theorem}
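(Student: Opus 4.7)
The plan is to run Smirnov's proof of Cardy's formula directly on the refinements $G^{(n)}$, using the setup of Section~2.4 to identify the relevant Riemann surface with the rectangle $\mathfrak R_\rho$, where $\rho := \rho_G(a,b,c,d)$. First I would observe that by construction the complex structure obtained by welding equilateral triangles according to the combinatorics of $G^{(n)}$ is independent of $n$ and admits the conformal equivalence $\Psi = \Psi_{G;a,b,c,d}$ to $\mathfrak R_\rho$ sending the marked points $a,b,c,d$ to its corners. In this equilateral embedding, $G^{(n)}$ is a refinement by equilateral triangles of side length proportional to $2^{-n}$, which places us squarely in Smirnov's original setup.

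Following Smirnov, I would define on the vertices of $G^{(n)}$ the three parafermionic observables $H_n^1, H_n^\tau, H_n^{\tau^2}$ as probabilities of certain crossings between the four boundary arcs. The central combinatorial identity, established by color switching, relates the values of the three observables at the vertices of any positively oriented triangular face of $G^{(n)}$ and amounts to a discrete Cauchy--Riemann equation for the $\mathbb C$-valued function $H_n^1 + \tau H_n^\tau + \tau^2 H_n^{\tau^2}$. The key point is that this identity is face-based and uses only the local triangular structure, so it holds at every face of $G^{(n)}$, whether or not the face is incident to an original vertex of $G$ (at which the cone angle in the equilateral embedding may differ from $2\pi$).

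Next I would invoke RSW theory on $G^{(n)}$, which should apply uniformly in $n$ because $G^{(n)}$ agrees combinatorially with the triangular lattice away from the finite, $n$-independent set of bounded-degree ``defect'' vertices inherited from $G$; this yields uniform equicontinuity of the observables. By compactness any subsequential limit is holomorphic on the equilateral Riemann surface (via a Morera-type argument of the kind isolated in the first appendix), equivalently on $\mathfrak R_\rho$ after transport by $\Psi$; the boundary conditions imposed by the marked arcs identify it as the unique Cardy--Smirnov solution, so $P[C_n]\to \mathfrak C(\rho_G(a,b,c,d))$ along subsequences, and by uniqueness of the limit along the full sequence. The main obstacle will be this equicontinuity step: one must show that neither the RSW bounds nor the discrete-to-continuous passage are disrupted by the $O(1)$ cone-point defects. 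Since these are fixed in the surface, finite in number, and of bounded degree, I expect a localization argument — namely, that crossings through a shrinking neighborhood of each defect contribute negligibly uniformly in $n$ — to reduce all estimates to the genuinely triangular-lattice bulk, after which Smirnov's analysis applies verbatim.
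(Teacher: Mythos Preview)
Your overall strategy---define Smirnov's three observables, use color-switching, extract subsequential limits via RSW, identify the limit via its boundary behavior---matches the paper's. The difference lies in how the limit is identified.

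You propose to work in the ``equilateral embedding,'' i.e.\ on the Riemann surface $M_G$ obtained by welding equilateral triangles, and to run Smirnov's Morera argument there. The paper instead keeps $G$ in an arbitrary Euclidean embedding and argues \emph{face by face}: for each face $f$ of $G$ it precomposes with a real-affine map $\sigma_f$ sending $f$ to an equilateral triangle, observes that inside $\sigma_f(f)$ the refined lattice is literally a patch of the regular triangular lattice, and applies Smirnov's local contour-integral computation there. This shows that any subsequential limit $h$ is holomorphic after composition with $\sigma_f^{-1}$, i.e.\ $h$ is quasi-conformal on $f$ with Beltrami coefficient $\mu_f$. Gluing the faces, $h\circ\Psi^{-1}$ is then holomorphic on $\mathfrak R_\rho$, and the boundary data finish the identification. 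No cone point is ever encircled by a contour.

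Your route is viable but the sentence ``this places us squarely in Smirnov's original setup'' hides the real work. The surface $M_G$ carries cone singularities at the original vertices of $G$ of degree $\neq 6$; there is no global flat coordinate $z$ in which to write $\oint H\,dz$, and the discrete contour sum around a cone point involves a non-hexagonal dual face where the usual cancellation fails. You flag the defects only as an obstacle to equicontinuity, but they equally obstruct the Morera step. The fix is to restrict to contours lying in a single face of $G$ (where a flat coordinate exists and all enclosed vertices have degree $6$), deduce holomorphy of the limit on $M_G$ minus the cone points, and then remove the singularities by boundedness. Once you do that, you have essentially reproduced the paper's face-by-face argument in different language; the paper's Euclidean/quasi-conformal packaging simply makes the localization automatic.
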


\section{Macroscopic triangulations}
\label{sec-3}

\subsection{Framework of the proof}
\label{sec-3-1}

We first focus on the  \emph{macroscopic} case, \emph{i.e.} for now we
consider   percolation   on   successive  refinements   of   a   fixed
triangulation of a planar domain (in other words, $\delta$ is fixed in
this section). Theorem~\ref{thm:cardy} above is stated in this setup.

One possible idea  to prove Theorem~\ref{thm:cardy} would  be to first
focus at  the contents of  an individual face,  and state that  in the
scaling  limit  one  gets  a continuous  object  described  using  for
instance $CLE_6$.  The restriction of  $G^{(n)}$ to  a face of  $G$ is
indeed an affine  deformation of the usual triangular  lattice, so the
scaling limit of percolation on  it, while deformed accordingly, would
still be  essentially the same as  that of percolation on  the regular
case.

The main  difficulty would  then be to  somehow connect  these scaling
limits across edges of $G$, in a precise enough way to obtain an exact
limit.  Unfortunately, this  seems to  be extremely  difficult if  not
impossible to do with this level  of precision. Writing $G$ as a union
of overlapping lozenges (intersecting on  faces) goes some way in this
direction but seems to be insufficient to get explicit limits. Proving
convergence directly at the level of the exploration process first and
deriving crossing  probabilities from it  is actually doable,  but the
direct approach that we apply instead is more robust in the sense that
it will readily extend to the mesoscopic case.

\subsection{Setup and notation}
\label{sec-3-2}

We actually follow  Smirnov's original proof quite  closely, and refer
the    reader    to     the    articles    \cite{smirnov:perco}    and
\cite{beffara:easy} for a  more detailed version of some  of the steps
below;  rather  than  reproducing  every argument,  we  point  to  the
relevant differences as we go. The argument has two main steps:
\begin{itemize}
\item first, show a compactness result  in order to be able to extract
  subsequential limits of suitable quantities;
\item  then  identify  the  unique possible  subsequential  limit,  by
  showing a conformal invariance property.
\end{itemize}
The choice  of the  quantity of  interest, and the  first step  of the
proof,  are  extremely  similar  to their  counterparts  in  Smirnov's
article  \cite{smirnov:percol}.  Only  the  identification  is  really
different.

Let $\Omega$ be the simply  connected domain in the plane triangulated
by $G^{(n)}$. If $f$ is a face of $G^{(n)}$ (or equivalently, a vertex
of its dual graph), let $E_a^{(n)}(f)$  be the event that there exists
a chain of  pairwise distinct black vertices of  $G^{(n)}$ joining two
boundary points  of $\Omega$ and  separating it into two  regions, one
containing $a$ and  $f$ and the other one containing  $b$ and $c$. Let
$H_a^{(n)}$ be the  probability of that event;  define $H_b^{(n)}$ and
$H_c^{(n)}$ similarly,  in the  obvious fashion.  Recall that  $\tau =
e^{2\pi i/3}$,  and define \[H^{(n)}  := H_a^{(n)} + \tau  H_b^{(n)} +
\tau^2 H_c^{(n)}\] (considered as a function from $\Omega$ to $\mathbb
C$ that is  constant on the faces of $G^{(n)}$).  The main result will
be a consequence of the following:

\begin{proposition}
  As  $n\to\infty$,  $H^{(n)}$  converges   uniformly  to  the  unique
  quasi-conformal map  $h:\Omega\to\mathbb C$  which maps  $\Omega$ to
  the equilateral triangle with vertices $(1,\tau,\tau^2)$, $a$ (resp.
  $b$,  $c$)  to  $1$  (resp. $\tau$,  $\tau^2$),  and  with  Beltrami
  coefficient $\mu$ chosen according to~\eqref{eq:mu_f}.
\end{proposition}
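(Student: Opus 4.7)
The plan is to follow Smirnov's original proof~\cite{smirnov:percol} closely, substituting face-by-face a discrete Beltrami equation with coefficient $\mu_f$ for his discrete Cauchy--Riemann equation, and identifying the limit via the measurable Riemann mapping theorem instead of the Riemann mapping theorem.

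\textbf{Compactness.} Inside each face $f$ of $G$, the graph $G^{(n)}$ is an affinely deformed patch of the triangular lattice with mesh of order $2^{-n}$; along each edge of $G$, two such patches are glued along matching combinatorial boundaries. Russo--Seymour--Welsh theory for site percolation on the triangular lattice therefore applies inside each face, and extends across edges of $G$ by an RSW argument on a lozenge formed by two adjacent faces. This yields uniform-in-$n$ polynomial bounds on the standard multi-arm events, and hence uniform Hölder regularity of $H_a^{(n)}$, $H_b^{(n)}$, $H_c^{(n)}$ on $\overline\Omega$, up to and including the corners $a,b,c$. By Arzelà--Ascoli, every subsequence admits a further subsequence along which $H^{(n)}$ converges uniformly to some continuous $h : \overline\Omega \to \mathbb C$.

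\textbf{Discrete Beltrami equation.} At any interior vertex $v$ of $G^{(n)}$ lying in a single face $f$ of $G$, Smirnov's colour-swapping identity holds combinatorially, since $v$ has exactly six triangular neighbours arranged as in the regular triangular lattice. In the equilateral model of $f$, that identity is the exact discrete Cauchy--Riemann relation Smirnov exploits for $H^{(n)}$. The embedding of $f$ into $\Omega$ is an affine map of constant complex dilatation $\mu_f$, so pulling the identity back through this affine map turns it into the discrete analogue of
\[\partial_{\bar z} H^{(n)} = \mu_f \, \partial_z H^{(n)},\]
and summing over small loops gives, for every smooth contour $\gamma$ contained in a single face,
\[\oint_\gamma \bigl(H^{(n)}\dd z + \mu_f H^{(n)} \dd \bar z\bigr) \;\longrightarrow\; 0.\]
The Morera-type characterization of quasi-conformal maps proved in the appendix then ensures that any subsequential limit $h$ is quasi-conformal on each face $f$ with Beltrami coefficient $\mu_f$, and therefore globally on $\Omega$ with the piecewise-constant coefficient $\mu$ of~\eqref{eq:mu_f}.

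\textbf{Boundary values and uniqueness.} Standard RSW estimates force $H_a^{(n)} \to 1$ on the open boundary arc $(bc)$ and $H_a^{(n)} \to 0$ on $(ab) \cup (ca)$, with analogous statements for $H_b$ and $H_c$; the corresponding limits near the corners are controlled by the three-arm exponent in a half-plane, already uniform in $n$. Consequently $h$ maps the three boundary arcs of $\partial\Omega$ continuously and monotonically onto the three sides of the equilateral triangle with vertices $(1,\tau,\tau^2)$, sending $a,b,c$ respectively to $1,\tau,\tau^2$. By the measurable Riemann mapping theorem applied to $\mu$, such a quasi-conformal homeomorphism is unique, hence every subsequential limit equals the map $h$ of the statement and the full sequence $H^{(n)}$ converges uniformly.

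\textbf{Main difficulty.} The delicate point is the second step: translating Smirnov's combinatorial identity, which is naturally expressed in the equilateral model of each face, into an identity for $H^{(n)}$ viewed as a function of the actual complex coordinate on $\Omega$, with $\mu_f$ appearing \emph{exactly} as in the formula~\eqref{eq:mu_f}. This is a bookkeeping task involving the affine map from the equilateral model to $f$ and its complex dilatation, but it must be carried out carefully enough that the error in the discrete contour identity tends to zero uniformly at every mesoscopic scale; degenerate near-boundary contours and contours straddling an edge of $G$ require a small additional argument to match the coefficients $\mu_{f_1}, \mu_{f_2}$ of the two adjacent faces. Once this bookkeeping is in place, the rest of the argument is essentially a quasi-conformal upgrade of the reasoning in~\cite{smirnov:percol}.
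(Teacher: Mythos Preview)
Your proposal is correct and follows essentially the same architecture as the paper: RSW-based compactness, then identification of any subsequential limit as the unique quasi-conformal map with Beltrami coefficient $\mu$, via Smirnov's colour-switching combined with an affine change of variables on each face.

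The one presentational difference is worth noting because it dissolves exactly the difficulty you flag. You propose to pull Smirnov's discrete Cauchy--Riemann identity back through the affine map into a discrete Beltrami relation in the \emph{original} coordinates on $f$, and then invoke the Morera-type characterization from the appendix; you correctly identify the associated bookkeeping (tracking $\mu_f$ through the affine map, matching coefficients across edges of $G$) as the delicate point. The paper instead goes the other way: it fixes a face $f$, picks a real-linear map $\sigma_f$ sending $f$ to an equilateral triangle, and considers $h_n^\sigma := h_n \circ \sigma_f^{-1}$. Because the observable is defined purely combinatorially, $h_n^\sigma$ is literally Smirnov's observable for the re-embedded graph, so his original argument applies \emph{verbatim} on $\sigma_f(f)$ and gives holomorphy of the limit $h^\sigma$ there; pulling back, $h$ has Beltrami coefficient $\mu_f$ on $f$. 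This sidesteps all the explicit $\mu_f$-bookkeeping you describe.

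Two small remarks. First, you need not worry about contours straddling an edge of $G$: quasi-conformality with the piecewise-constant coefficient $\mu$ follows once you know it on each open face separately, since $h$ is already continuous (from the H\"older estimate) and $\mu$ is only defined almost everywhere. Second, the appendix proposition is phrased for injective maps, but in the direction you use (vanishing contour integrals $\Rightarrow$ Beltrami solution) only continuity of $h$ is needed, since the proof reduces to ordinary Morera for $h\circ\varphi_0^{-1}$.
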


Indeed, once this  proposition is known to hold, it  is enough to look
at the observable $H^{(n)}$, and at its limit $h$, at a boundary point
$d$ to obtain the value of the corresponding crossing probability.

\subsection{Compactness and RSW estimates}
\label{sec-3-3}

The first  argument is  one of  uniform continuity.  We first  state a
result similar to the classical Russo-Seymour-Welsh estimate:

\begin{proposition}
For  every $\lambda>1$  there  exists $c>0$  such  that the  following
holds. Let  $\mathfrak R$  be a rectangle  of aspect  ratio $\lambda$,
entirely contained in $\Omega$. Then,  for every $n$ large enough, the
probability that percolation at  parameter $1/2$ on $G^{(n)}$ contains
a  path  of  black  vertices  crossing  $\mathfrak  R$  lengthwise  is
contained in $[c,1-c]$.
\end{proposition}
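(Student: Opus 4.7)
My plan is to mimic the classical Russo--Seymour--Welsh bootstrap, taking advantage of the fact that inside every face $f$ of the initial triangulation $G$, the refinement $G^{(n)}$ restricts to a large triangular patch of the regular triangular lattice, affinely mapped into $f$ by the unique orientation-preserving linear map $\phi_f$ sending the reference equilateral triangle to $f$. Critical site-percolation on this restriction is therefore the image, under $\phi_f$, of critical site-percolation on the triangular lattice, for which both RSW and self-duality at $p=1/2$ are classical.

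First I would establish RSW inside a single face: for any rectangle $\mathfrak R$ of aspect ratio $\lambda$ contained in a single face $f$ of $G$, its affine preimage $\phi_f^{-1}(\mathfrak R)$ is a parallelogram of aspect ratio squeezed between $\lambda/K^2$ and $\lambda K^2$, where $K \eqd \max_f \|\phi_f\|\,\|\phi_f^{-1}\|$ is finite since $G$ has only finitely many non-degenerate faces. Classical RSW on the triangular lattice (see e.g.~\cite{beffara:easy}) then gives, for $n$ large enough that the mesh $2^{-n}$ is much smaller than the dimensions of $\mathfrak R$, both lengthwise and crosswise black-crossing probabilities in $[c_0,1-c_0]$ for some $c_0=c_0(\lambda,K)>0$.

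Next, for a general rectangle $\mathfrak R \subset \Omega$ of aspect ratio $\lambda$, I would decompose $\mathfrak R$ lengthwise into a chain of overlapping sub-rectangles, each of aspect ratio comparable to $\lambda$ and each sitting inside a single face of $G$, arranged so that every consecutive pair shares a ``square'' overlap region that is also contained in a single face. Applying intra-face RSW to lengthwise crossings of the sub-rectangles and to crosswise crossings of the overlap squares, and combining them by the FKG inequality in the usual Russo fashion, produces a black lengthwise crossing of $\mathfrak R$ with probability at least some $c>0$ depending on $\lambda$, $\Omega$ and $G$ but not on $n$. The upper bound $1-c$ then follows from self-duality of site percolation at parameter $1/2$ on the triangulation $G^{(n)}$: the absence of a black lengthwise crossing is exactly a white crosswise crossing, and the lower bound applied to white crossings in the transposed direction (aspect ratio $1/\lambda<1$, which is the easier case, obtained for instance by the square-crossing-plus-FKG argument) yields the complementary estimate.

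The main obstacle is the chaining step: one must ensure that the chain of sub-rectangles and overlap squares can always be laid out so that each piece really lies inside a single face of $G$. When $\mathfrak R$ runs close to a vertex of $G$, or nearly parallel to an edge of $G$ for a long stretch, this requires some care in choosing the orientation, positions and sizes of the sub-rectangles. Because $G$ is a fixed finite graph and the set of aspect-ratio-$\lambda$ rectangles contained in $\Omega$ is compact up to the trivial symmetries, a covering argument reduces the construction to finitely many model configurations, each handled explicitly. This is the only step where the macroscopic geometry of $G$ plays a non-trivial role; the microscopic ingredients (RSW and self-duality inside a single face) are entirely inherited from the regular triangular lattice.
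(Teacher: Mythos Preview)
Your approach parallels the paper's in spirit—both reduce to classical RSW inside individual faces of $G$ and then chain via FKG—but the chaining step as you describe it has a genuine gap. You require each sub-rectangle \emph{and} each overlap square to lie inside a single face of $G$. When $\mathfrak R$ crosses an edge $e$ of $G$ transversally, this is impossible: a sub-rectangle $R_1$ contained in the face $f_1$ on one side of $e$ and a sub-rectangle $R_2$ contained in the adjacent face $f_2$ can meet only along the segment $e$, never in a two-dimensional square. So your chain either never leaves the first face it enters, or breaks at every edge of $G$ it must cross. The compactness reduction you invoke does not repair this; it is not a question of finitely many awkward configurations but of the basic mechanism.

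The paper's fix is the observation you are missing: the union of two adjacent faces of $G$—a lozenge—is still, \emph{as a graph}, a patch of the regular triangular lattice (two triangles glued along an edge form a rhombus in the triangular lattice, and the subdivision respects this). Hence RSW applies not only inside each face but inside each such lozenge. The paper then chooses a thinner sub-rectangle $\mathfrak R' \subset \mathfrak R$ that avoids all vertices of $G$ (by pigeonhole on the finitely many vertices, $\mathfrak R'$ can be taken with width at least $|V(G)|^{-1}$ times that of $\mathfrak R$, which also gives the uniformity in $c$), cuts $\mathfrak R'$ into sections by the edges of $G$ it meets, and applies RSW to each \emph{pair of consecutive sections}, which lies in a single lozenge. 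FKG then glues the resulting lengthwise crossings. Replacing your single-face constraint by a single-face-or-lozenge constraint would make your argument go through along essentially the same lines; your treatment of the upper bound via self-duality is fine.
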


\begin{proof}
  The  result is  well-known in  the  case of  the regular  triangular
  lattice. In particular,  there is nothing to prove  if the rectangle
  $\mathfrak R$ is  entirely contained within one of the  faces of the
  initial triangulation  $G$. We  give a brief  explanation of  how to
  extend it to the general case.

  Let  $\mathfrak R$  be a  fixed  rectangle. There  exists a  thinner
  rectangle $\mathfrak R'$  contained in $\mathfrak R$,  such that any
  crossing of  $\mathfrak R'$  also crosses  $\mathfrak R$,  and which
  contains no vertex  of $G$ and intersects a certain  number of edges
  of  $G$ transversally.  Now, between  each pair  of successive  such
  edges, $\mathfrak R'$  intersects $G^{(n)}$ as a piece  of the usual
  triangular lattice, so each of these ``sections'' is crossed in both
  directions with positive  probability. On the other  hand, the union
  of two  successive sections is  still isomorphic,  as a graph,  to a
  piece  of the  triangular lattice,  so every  such union  is crossed
  lengthwise with positive probability.  By the Harris-FKG inequality,
  all   of  these   crossings  exist   simultaneously  with   positive
  probability, and  when they do,  their union contains a  crossing of
  $\mathfrak R'$ and hence of $\mathfrak R$.

  It is easy to obtain uniformity  in the estimate, for a given aspect
  ratio of $\mathfrak R$, because the  graph $G$ is finite --- this is
  where the mesoscopic case will be  more involved. For instance, by a
  pigeonhole argument,  $\mathfrak R'$ can  be chosen with a  width at
  least equal to $|V(G)|^{-1}$ times that of $\mathfrak R$.
\end{proof}

\begin{proposition}
  There exist two constants  $\varepsilon>0$, $K<\infty$ such that the
  following holds.  Let $n>0$;  let $f_1$  and $f_2$  be two  faces of
  $G^{(n)}$,  and denote  by $d(f_1,f_2)$  the distance  between them,
  either  in  the  Hausdorff  sense,  or  equivalently  the  Euclidean
  distance   between   their    centers.   Then,   \[|H^{(n)}(f_2)   -
  H^{(n)}(f_1)| \leqslant  K d(f_1,f_2)^\varepsilon.\]  In particular,
  the family  $(H^{(n)})$ is  relatively compact  for the  topology of
  uniform convergence: any sequence $n_k \to \infty$ has a subsequence
  $(n_{j(k)})$ along which $H^{(n_{j(k)})}$  converges uniformly to an
  $\varepsilon$-Hölder function defined on $\Omega$.
\end{proposition}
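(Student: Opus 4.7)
The plan is to follow Smirnov's classical strategy and bound the modulus of continuity of $H^{(n)}$ by the probability of polychromatic arm events in small annuli, which by RSW-type arguments decay polynomially. Fix two faces $f_1, f_2$ of $G^{(n)}$ and set $r = d(f_1, f_2)$. The symmetric difference $E_a^{(n)}(f_1) \triangle E_a^{(n)}(f_2)$ is contained in the event that some black separating chain passes between $f_1$ and $f_2$. Such a chain, together with the two dual white ``sides'' required in order for it to actually split the boundary into two arcs, forces the existence of an alternating polychromatic four-arm event centered near the segment $[f_1, f_2]$, from scale $r$ out to a scale comparable to the distance $R$ from $\{f_1,f_2\}$ to $\partial\Omega$ (with the familiar replacement by a faster-decaying three-arm half-plane event when $f_1, f_2$ are close to the boundary).

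To convert this observation into a Hölder bound, I would upgrade the rectangle-crossing estimate of the previous RSW proposition to an arm-event estimate via the standard Harris--FKG gluing and an Aizenman--Burchard-type chaining across dyadic scales. This yields constants $\varepsilon > 0$ and $C < \infty$, independent of $n$, such that any polychromatic four-arm event in an annulus $A(r, R) \subset \Omega$ has probability at most $C(r/R)^\varepsilon$. Combining this with the reduction above gives $|H_a^{(n)}(f_1) - H_a^{(n)}(f_2)| \leq K r^\varepsilon$ (absorbing the $R$-dependence into $K$ since $R \leq \operatorname{diam}(\Omega)$), and summing the analogous bounds for $H_b^{(n)}$ and $H_c^{(n)}$ yields the stated Hölder bound for $H^{(n)}$. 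Since each $H_\bullet^{(n)}$ takes values in $[0,1]$, the family $(H^{(n)})$ is uniformly bounded and uniformly $\varepsilon$-Hölder, so by Arzelà--Ascoli it is relatively compact in the uniform topology on $\Omega$ and every subsequential limit is itself $\varepsilon$-Hölder.

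The main obstacle is the upgrade from rectangle RSW to the annular multi-arm estimate with constants uniform in $n$. In the macroscopic setting, where $G$ is fixed and only $n$ varies, this is reasonable: each seam (edge of the original triangulation $G$) is crossed by a bounded number of edges of $G^{(n)}$ at each dyadic scale, and the standard gluing of crossings across seams goes through as soon as each face of $G$ is refined finely enough, which happens for $n$ large. Still, the combinatorial bookkeeping of chaining crossings across the seams is where the real work lies, and the same difficulty will resurface much more sharply in the mesoscopic regime, where the initial triangulation itself varies with $\delta$ and uniformity in both parameters will be needed.
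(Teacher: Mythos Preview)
Your overall strategy---bound $|H^{(n)}(f_1)-H^{(n)}(f_2)|$ by an arm-event probability and then appeal to RSW---is in the right spirit, but you take a harder road than the paper and your key reduction has a gap.

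The paper's argument is more elementary: if a self-avoiding \emph{black circuit} surrounds both $f_1$ and $f_2$, then any black separating chain witnessing $E_a^{(n)}(f_1)$ can be rerouted along the circuit to witness $E_a^{(n)}(f_2)$, and conversely; hence $E_a^{(n)}(f_1)$ and $E_a^{(n)}(f_2)$ agree on that event. The difference $|H_a^{(n)}(f_1)-H_a^{(n)}(f_2)|$ is therefore bounded by the probability that \emph{no} black circuit exists in an annulus of inner radius $\asymp d(f_1,f_2)$ around $\{f_1,f_2\}$, and this decays like a power of $d(f_1,f_2)$ directly from the RSW proposition applied in concentric dyadic annuli together with Harris--FKG. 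No multi-arm estimate is needed at all.

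Your reduction to a four-arm event is not correctly justified. A black boundary-to-boundary chain separates $\Omega$ into two regions by topology alone; there are no ``dual white sides required in order for it to actually split the boundary,'' so the two white arms you invoke do not come for free from the separating chain. One can, with additional work, extract white arms from the \emph{failure} of $E_a^{(n)}(f_2)$ (e.g.\ by considering the outermost black separator and using planar duality on its far side), but that is not the argument you wrote. Even granting the four-arm event, upgrading RSW to a polynomial upper bound on polychromatic four-arm probabilities is itself more involved than the one-line circuit estimate above. So your route is salvageable in principle, but as written it is both incomplete and unnecessarily heavy; the circuit argument is the clean way to carry out this step.
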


\begin{proof}
  The core of the  argument is the same as in  Smirnov's article: if a
  self-avoiding  circuit of  black  vertices surround  both $f_1$  and
  $f_2$ then  either the events $E_a^{(n)}(f_1)$  and $E_a^{(n)}(f_2)$
  are both realized, or none  of them is. Consequently, the difference
  in the  statement of the lemma  is bounded above by  the probability
  that no such circuit exists. But this last probability is at most of
  polynomial order in the distance between the two faces, which can be
  proved using RSW estimates.
\end{proof}

\subsection{Identification of the limit}
\label{sec-3-4}

The central result of this section is the following:

\begin{proposition}
  Let $h$ be any subsequential limit of the sequence $(h_n)$: then $h$
  is quasi-conformal  on $\Omega$,  and on  each face  $f$ of  $G$ its
  Beltrami coefficient is a.e.\ equal to $\mu_f$.
\end{proposition}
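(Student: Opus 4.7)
The plan is to follow Smirnov's identification step, but adapted to the affinely deformed setting by working face by face. Fix a face $f$ of $G$ with affine straightening map $\varphi_f$ sending $f$ onto an equilateral triangle; by construction $\varphi_f$ has constant Beltrami coefficient $\mu_f$ given by~\eqref{eq:mu_f}, and the image $\varphi_f(G^{(n)}|_f)$ is literally a piece of the regular equilateral triangular lattice of mesh of order $2^{-n}$. The problem inside $f$ is therefore equivalent, up to this affine change of variables, to the corresponding problem on a regular triangular lattice, to which Smirnov's argument can be applied to the pushed-forward observable $\widetilde H^{(n)} := H^{(n)} \circ \varphi_f^{-1}$.

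First I would establish approximate discrete holomorphicity of $\widetilde H^{(n)}$ on the equilateral image of $f$. At any interior vertex $v$ whose full hexagonal neighborhood lies inside $\varphi_f(f)$, the color-swap identity of \cite{smirnov:percol} relating the events $E_a^{(n)}$, $E_b^{(n)}$, $E_c^{(n)}$ on the six surrounding faces goes through verbatim, because it is a purely combinatorial statement about a hexagon in the regular triangular lattice. Summing this local relation shows that a discrete contour integral of $\widetilde H^{(n)}$ around any mesoscopic loop contained in $\varphi_f(f)$ vanishes up to an error controlled by the Hölder estimate from the previous proposition.

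Passing to the limit along the chosen subsequence then shows that $\widetilde h := h \circ \varphi_f^{-1}$ satisfies the hypotheses of Morera's theorem on the interior of $\varphi_f(f)$, hence is holomorphic there. Pulling back by $\varphi_f$ yields the Beltrami equation $\partial_{\bar z} h = \mu_f \, \partial_z h$ almost everywhere on $f$, which is the announced identification of the Beltrami coefficient. Since the finitely many values $\mu_f$ are uniformly bounded in modulus by some constant strictly less than $1$ (by the standing non-degeneracy assumption on $G$), and since $h$ is already known to be Hölder continuous on $\Omega$, $h$ is a continuous map whose distributional Beltrami coefficient is essentially bounded away from $1$, hence quasi-conformal on $\Omega$. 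An alternative presentation would be to apply once and for all the quasi-conformal version of Morera's theorem from the appendix directly to $H^{(n)}$ on $\Omega$, which bypasses the face-by-face pre-composition at the last step.

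The main technical obstacle is the treatment of vertices of $G^{(n)}$ that lie on or adjacent to an edge of the macroscopic graph $G$: there, the hexagonal neighborhood used in the color-swap identity straddles two faces with different values of $\mu_f$, and the exact discrete identity fails. Such problematic vertices are however confined to a codimension-one neighborhood of the $1$-skeleton of $G$, so the number of them crossed by a discrete contour of fixed macroscopic size is of lower order than the number of bona fide interior vertices on that contour; combined with the uniform Hölder control on $H^{(n)}$, their cumulative contribution to any contour sum is negligible as $n \to \infty$. This is precisely why the proposition only asserts the correct Beltrami coefficient almost everywhere on each face, and makes no pointwise claim along the edges of $G$ themselves.
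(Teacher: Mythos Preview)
Your proposal is correct and follows essentially the same approach as the paper: straighten each face $f$ by a real-linear map to an equilateral triangle, observe that the pushed-forward observable satisfies Smirnov's local color-switching identities there, and conclude via Morera that the limit is holomorphic on the image and hence has Beltrami coefficient $\mu_f$ on $f$. Your final paragraph on boundary vertices is not actually needed in the face-by-face argument, since one may restrict to contours lying entirely in the interior of a single face (so that for large $n$ every relevant hexagon is interior to $f$); the paper simply makes this restriction and sidesteps the issue.
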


\begin{proof}
  The argument  is actually rather simple.  Let $f$ be a  face of $G$.
  There exists  a continuous,  one-to-one map  $\sigma_f :  \Omega \to
  \mathbb C$ which is real-linear within each face of $G$ and maps $f$
  itself to an  equilateral triangle. This map  is quasi-conformal and
  its Beltrami  coefficient in $f$  is equal  to $\mu_f$. We  will use
  $\sigma_f$ to  look at percolation ``from  the point of view  of the
  face $f$''.

  For $n>0$, let $h_n^\sigma =  h_n \circ \sigma_f^{-1}$ be defined on
  $\sigma_f(\Omega)$.  Because  the  definition  of  $h_n$  is  purely
  combinatorial in nature, $h_n^\sigma$ is exactly the observable that
  one would  get if $G$ were  embedded as its image  by $\sigma_f$. In
  particular,  it shares  all  its local  features,  most notably  the
  color-switching lemma.  Moreover, convergence is conserved  by right
  composition with  $\sigma_f^{-1}$: fix a subsequential  limit $h$ of
  $(h_n)$  and  let  $h^\sigma  = h  \circ  \sigma^{-1}$,  defined  on
  $\sigma_f(\Omega_G)$ as well.

  The  key  remark  is  now  that the  initial  proof  of  Smirnov  is
  fundamentally local:  copying it  \emph{mutatis mutandis},  one gets
  that the  contour integral of  $h^\sigma$ along each  closed contour
  \emph{contained   in  $\sigma_f(f)$}   vanishes.  This   means  that
  $h^\sigma$ is holomorphic in $\sigma_f(f)$,  and in turn that $h$ is
  quasi-conformal on $f$ and has the same Beltrami derivative on it as
  $\sigma_f$.
\end{proof}

Now,  let  $h$ be  a  subsequential  limit  of $(h_n)$.  Remember  the
definition of  the map $\Psi$ in  the previous section; it  solves the
same Beltrami  equation as $h$, which  means that they are  related to
one another  by left-composition by  a conformal map. In  other words,
the map $g := h \circ \Psi^{-1}$ is a holomorphic map on the rectangle
$[0,\rho] \times  [0,1]$. The rest  of the discussion is  then exactly
the same as  in the regular case:  $g$ has its image  contained in the
triangle with  vertices $(1,\tau,\tau^2)$,  maps boundary  to boundary
homeomorphically,  and  from  known  boundary  values,  one  can  then
conclude that  $g$ is the unique  conformal map from the  rectangle to
the equilateral  triangle with  these properties, which  concludes the
proof.

\section{Mesoscopic triangulations}
\label{sec-4}

We now  turn to  the proof  of convergence in  the case  of mesoscopic
triangulations,  \emph{i.e.}  as  both $\delta\to0$  and  $N\to\infty$
simultaneously. The overall  framework of the argument is  the same as
before, but more care is needed in order to control the convergence to
$0$  of contour  integrals,  and in  addition the  Russo-Seymour-Welsh
estimate is not a direct consequence of the classical one.

Intuitively, the slower $N$ increases relative to $\delta$, the harder
the proof  becomes --- and indeed  the bounded $N$ situation  would be
full  universality for  percolation on  triangulations, which  is very
much beyond  reach. In fact,  it is  far from clear  whether embedding
using  welded equilateral  triangles  remains relevant  in that  case,
although it  is certainly the  right thing to do  as soon as  $N$ does
tend to infinity.

\subsection{Main assumption: uniform RSW estimates}
\label{sec-4-1}

We  begin  as  before  with \emph{a  priori}  estimates  for  crossing
probability.  Consider the  lattice $\mathcal  T_{\delta,N}$ on  which
site-percolation for  parameter $1/2$  is defined. Moreover,  choose a
rectangle $\mathfrak R$ of aspect  ratio $\lambda>1$. We will from now
on assume that the following holds.

\begin{assumption}
  There exists  a constant  $c=c(\lambda)$ depending on  $\lambda$ but
  not  on the  size or  the orientation  of $\mathfrak  R$ such  that,
  uniformly  as $\delta\to0$  and $N\to\infty$,  the probability  that
  $\mathfrak  R$ is  crossed in  the  long direction  is contained  in
  $[c,1-c]$.
\end{assumption}

It is  equivalent to make the  assumption for one particular  value of
$\lambda>1$ and to  make it for all $\lambda>1$,  because crossings of
longer rectangles can  easily be constructed from  unions of crossings
of shorter ones. In the previous section, \emph{a priori} estimates on
crossing probabilities were used  to obtain the uniform equicontinuity
of the observable as the lattice gets finer and finer: this will still
be the  case here;  we refer the  reader to the  last section  of this
paper for more remarks about this assumption.

\subsection{The setup of the proof}
\label{sec-4-2}

Let $\Omega$ be a simply connected domain of the plane with $3$ points
$a$, $b$  and $c$ on its  boundary, in positive order;  fix $\delta>0$
and $N>0$ for now (with the understanding that $\delta$ will go to $0$
and   simultaneously  $N$   will  go   to  infinity).   Let  $\mathcal
T_{\delta,N}$ be the triangulation of the plane described in the first
introduction; we will call \emph{cells}  the faces of the rescaled but
not yet subdivided  triangulation. The length of an  edge of $\mathcal
T_{\delta,N}$ is of  order $\delta/N$, and the diameter of  one of its
cells is of  order $\delta$. Let $\Phi_\delta : \mathbb  C \to \mathbb
C$ be  the quasi-conformal  map constructed  in the  previous section,
with Beltrami  coefficient given by~\eqref{eq:mu_f} ---  since all the
faces within a cell have  the same shape, $\Phi_\delta$ indeed depends
on $\delta$ but not on $N$.

We will re-use some of the  notation in \cite{beffara:easy}. If $z$ is
a  (triangular) face  of  $\mathcal T_{\delta,N}$,  or equivalently  a
vertex of its dual graph, let $H^{(\delta,N)}_a(z)$ be the probability
that,  for   critical  site-percolation   on  $\Omega   \cap  \mathcal
T_{\delta,N}$, there  is a chain  of pairwise distinct  black vertices
joining two boundary  vertices and separating $a$ and $z$  on one side
and $b$ and $c$ on the other. As before, define $H^{(\delta,N)}_b$ and
$H^{(\delta,N)}_c$ accordingly, and let
\begin{equation}
  H^{(\delta,N)} =  H^{(\delta,N)}_a + \tau H^{(\delta,N)}_b  + \tau^2
  H^{(\delta,N)}_c.
\end{equation}

As long as no confusion can arise,  we will drop $\delta$ and $N$ from
the notation and simply refer to $H$ and $\Phi$ where appropriate.

We  want  to   show  that  a  suitable   continuous  interpolation  of
$H^{(\delta,N)}$ is approximately quasi-conformal on $\Omega$ with the
same Beltrami  coefficient as $\Phi_\delta$.  To do that, we  will use
the characterization  in Appendix~\ref{sec:qc}; so, let  $\gamma$ be a
closed, smooth curve contained in $\Omega$, and let $\gamma_{\delta,N}
= (z_k)_{k=0..L-1}$  be a nearest-neighbor chain  of pairwise distinct
vertices of $\mathcal T_{\delta,N}^*$ which approximates $\gamma$ (for
ease  of notation,  let $z_L=z_0$).  Finally, let
\begin{equation}
  I  :=   \sum  _{k=0}   ^{L-1}  \frac   {H(z_{k+1})  +   H(z_k)}  {2}
  [\Phi(z_{k+1}) - \Phi(z_k)].
\end{equation}
This  is the  discrete  counterpart  of the  contour  integral in  the
quasi-conformal version of Morera's lemma, so  what we need to show is
that $|I|$ is small for appropriate choices of $\delta$ and $N$.

The beginning of the argument goes  the same way as for the triangular
lattice.   If   $F$  is   a   function   on  $\Omega   \cap   \mathcal
T^\ast_{\delta,N}$,  and if  $e=(\underline  e, \overline  e)$, is  an
oriented edge of $\mathcal T^\ast_{\delta,N}$, let
\begin{equation}
  F(e)   :=   \frac   {F(\underline   e)  +   F(\overline   e)}   {2};
  \qquad \partial_e F :=   F(\overline e) - F(\underline e).
\end{equation}
Then  $I$  can  be  rewritten  as  $I  =  \sum_{e\in\gamma_{\delta,N}}
H(e) \partial_e  \Phi$. Let $\mathcal  F$ be the  set of all  the dual
faces surrounded  by $\gamma_{\delta,N}$; if  $f \in \mathcal  F$, let
$\partial  f$ be  its  boundary,  read counterclockwise  as  a set  of
oriented  edges. Then,  since inner  edges  are counted  once in  each
orientation, one can rewrite
\begin{equation}
  I = \sum_{f \in \mathcal F}  \sum_{e \in \partial f} H(e) \partial_e
  \Phi.
\end{equation}

For every face  $f$ of $\mathcal T^\ast_{\delta,N}$, let  $c_f$ be the
vertex  of $\mathcal  T_{\delta,N}$ contained  in $f$.  It is  easy to
verify,  reindexing the  sums involved  above (see~\cite{beffara:easy}
for the details), that one has the identity
\begin{equation}
  I = -  \sum_{f \in \mathcal F} \sum_{e \in  \partial f} \partial_e H
  [\Phi(e) -  \Phi(c_f)].
\end{equation}
There  are two  kinds of  edges in  that sum.  For those  on the  path
$\gamma_{\delta,N}$, we know from Russo-Seymour-Welsh estimates (which
hold   due   to  Assertion   1)   that   $\partial_e  H   =   \mathcal
O((\delta/N)^\eta)$ for  some positive $\eta$; together  with the fact
that   $\Phi(e)-\Phi(c_f)  =   \mathcal   O(\delta/N)$  and   choosing
$\gamma_{\delta,N}$ with  $L = \mathcal O(N/\delta)$,  which is always
possible, the  sum over  all such  edges ends up  providing a  term of
order $(\delta/N)^\eta$. For edges inside  the curve, each term of the
form $\partial_eH \Phi(e)$ appears twice  with different sign, so they
cancel out. Denoting by $e^*$ the dual  edge of $e$ (which makes it an
edge of $\mathcal  T_{\delta,N}$), oriented so that  the rotation from
$e$ to $e^*$ goes in the positive direction, this gives
\begin{equation}
  I    =   \sum_e    \partial_eH    \partial_{e^*}\Phi   +    \mathcal
  O((\delta/N)^\eta),
\end{equation}
where the sum  ranges over all the edges  of $\mathcal T_{\delta,N}^*$
surrounded by $\gamma$.

Let  $P^{(\delta,N)}_a(e)$  be  the  probability  that  $\overline  e$
satisfies the  conditions defining $H^{(\delta,N)}_a$  but $\underline
e$ does  not; then, $\partial_e  H_a =  P_a(e) - P_a(-e)$,  where $-e$
designates the reversal  of the edge $e$, and where  again for clarity
we  drop  $(\delta,N)$  from  the   notation.  Replacing  $H$  by  its
definition, and  reordering the  terms to make  each edge  appear only
once, leads to
\begin{equation}
  I = 2  \sum_e [P_a(e) + \tau P_b(e) +  \tau^2 P_c(e)] \partial_{e^*}
  \Phi.
\end{equation}

So far, nothing is very  different from the regular triangular lattice
case, because  we are just doing  algebra. The next step  is again the
same, it uses Smirnov's ``color-switching lemma'', which can be stated
as follows.  For a  given edge $e$  of $\mathcal  T^*_{\delta,N}$, its
source $\underline  e$ has degree  $3$; denote  by $e'$ and  $e''$ the
other two edges sharing the same source, ordered so that $e$, $e'$ and
$e''$ come  in that order turning  counterclockwise around $\underline
e$. Then, the lemma is the following identity: for every edge $e$,
\begin{equation}
P_a(e)  = P_b(e') = P_c(e'').
\end{equation}
The proof is exactly  the same again as in the regular  case, so we do
not repeat it here. Replacing in the previous estimate:
\begin{equation}
  \label{eq:bigsum}
  I     =    2     \sum_e     P_a(e)     [\partial_{e^*}    \Phi     +
  \tau  \partial_{(e')^*}\Phi   +  \tau^2   \partial_{(e'')^*}\Phi]  +
  \mathcal O((\delta/N)^\eta).
\end{equation}

In the equilateral case, $\Phi$  is the identity function, the bracket
term  is identically  $0$, and  the argument  ends here.  In the  more
general case, more  work needs to be  done. The main image  to keep in
mind (although  it does  not explicitly correspond  to the  proof that
follows)  is  that  the  image   of  $\mathcal  T_{\delta,N}$  by  the
quasi-conformal  map constructed  in section~\ref{sec-2-2}  has almost
all its faces almost equilateral --- see Figure~\ref{fig:fine}.

\begin{figure}[ht!]
  \centering
  \includegraphics[width=\hsize]{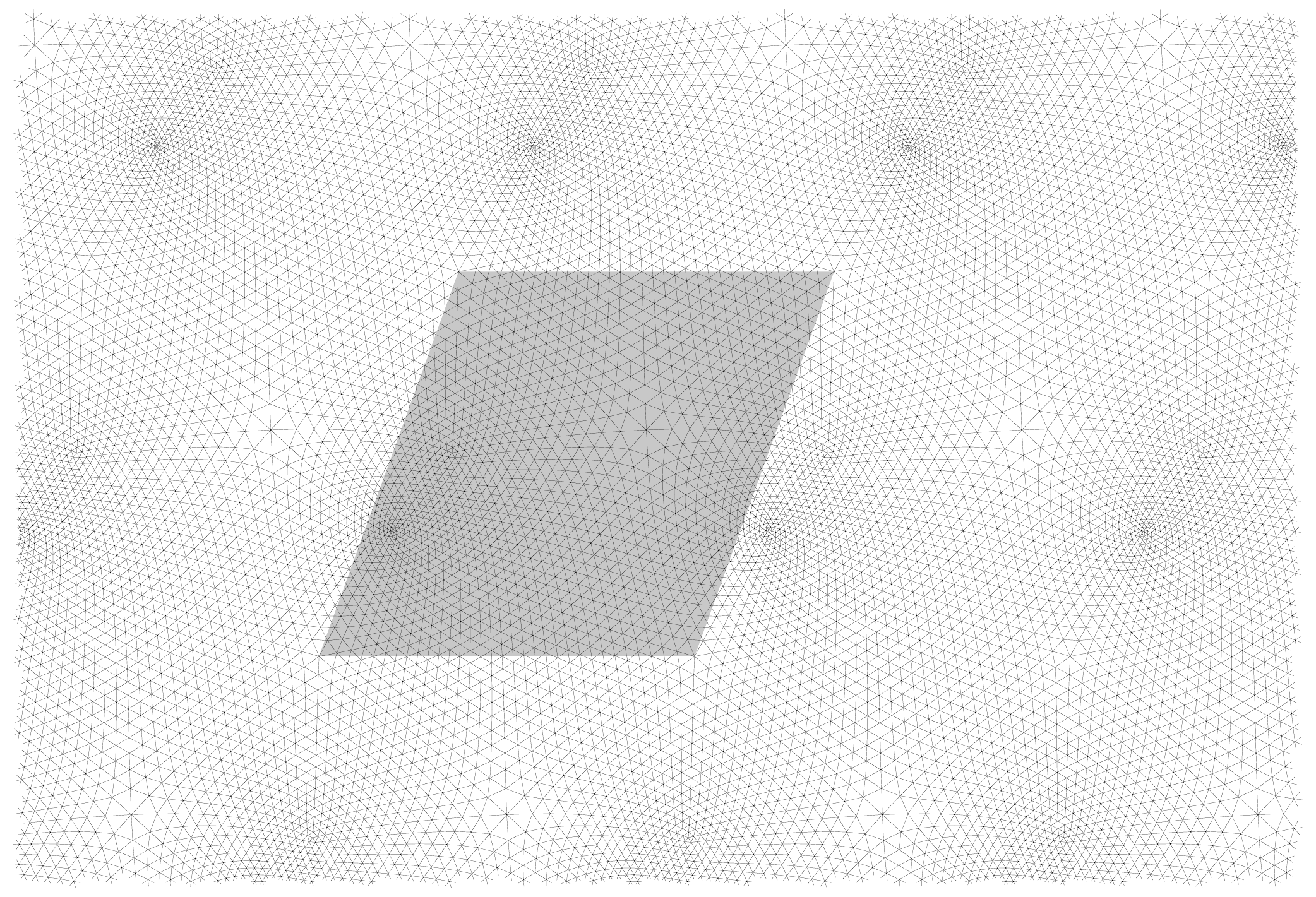}
  \caption{The image  of $\mathcal T_{\delta,N}$ by  the corresponding
    doubly periodic  quasi-conformal map; here $\delta=1$  and $N=16$.
    Away from the  initial vertices, the triangles are  close to being
    equilateral.}
  \label{fig:fine}
\end{figure}

\subsection{Controlling the bracket term}
\label{sec-4-3}

Assume for a  moment that the combinatorics of the  initial lattice is
that  of the  triangular lattice,  but  that the  embedding is  chosen
differently. More  specifically, one can  assume that $\mathcal  T$ is
the square lattice  with added diagonals in  the north-east direction.
Then  $\Phi$  is  the  real-affine  map  sending  it  to  the  regular
triangular lattice,  in other  words it maps  every face  of $\mathcal
T_{\delta,N}$ to an equilateral triangle. In that case, the bracket is
still identically equal to $0$, so $I$ is uniformly small.

What we will show is that the  general case is a small perturbation of
that  situation, as  soon as  $N$  is large  enough as  a function  of
$\delta$. Let $f$  be a (triangular) face  of $\mathcal T_{\delta,N}$,
and let $c$ be the $\delta$-cell  containing it. We first consider the
case when  $c$ (and therefore  $f$ as  well) is equilateral.  Then the
Beltrami coefficient of $\Phi$ vanishes  in $c$, in other words $\Phi$
is holomorphic in $c$.

Let  $d$ be  the  distance  between $f$  and  $\partial  c$: then  the
distorsion theorem states that the  second derivative of $\Phi$ inside
$f$  is dominated  by $\mathcal  O(|\Phi'|/d)$, in  addition to  which
$\Phi'$ is uniformly bounded on $c$. We can then do a Taylor expansion
of  the bracket  term in~\eqref{eq:bigsum}  corresponding to  $f$. The
constant term vanishes,  and so does the  first-derivative one because
the tangent map  is a complex multiplication which still  sends $f$ to
an  equilateral  triangle.  The  bracket then  reduces  to
\begin{equation}
  \mathcal O((\delta/N)^2 |\Phi''|) = \mathcal O((\delta/N)^2/d).
\end{equation}

It remains to control the sum over $c$ of that estimate. The number of
faces at distance $k\delta/N$ of the cell boundary is of order $N$, so
the  sum  over  $c$  of  the   bracket  terms  is  bounded,  up  to  a
multiplicative constant, by
\begin{equation}
  \sum_{k=1}^N N (\delta/N)^2 (N/k\delta) \simeq \delta \log N.
\end{equation}
This bound is not at all optimal, because near
the common  boundary of  two adjacent  equilateral cells,  $\Phi$ will
still  be analytic:  $d$  could be  replaced by  the  distance to  the
boundary of the  lozenge formed by these two cells.  What we will keep
from that  remark is that  the faces of $\mathcal  T_{\delta,N}$ along
the boundary  of $c$  (where $d=0$)  do not  contribute enough  to the
previous estimate  to change its  order of  magnitude. As for  the $3$
triangles near  the vertices of  $c$, the  fact that $\Phi$  is Hölder
shows that they  have a bracket of  order at most a  positive power of
$\delta/N$, and still do not contribute to the estimate above.

Actually, not much needs to be changed in the argument if the cell $c$
is  not equilateral:  one  can,  as in  the  macroscopic  case in  the
previous section,  pre-compose everything by a  real-affine map $\Psi$
of the whole plane sending $c$  to an equilateral triangle of the same
diameter. Then  $\Phi \circ \Psi^{-1}$  is analytic on  $\Psi(c)$, and
the reasoning  of the previous  paragraph applies to  it \emph{mutatis
  mutandis}.

It still  remains to  take the  sum over all  the cells  surrounded by
$\gamma$. There are of order $\delta^{-2}$ of these. Besides, the term
$P_a(e)$ is, from RSW estimates, bounded above by $(\delta/N)^{\eta'}$
for  some $\eta'>0$.  Putting  everything together,  we  obtain \[I  =
\mathcal O((\delta/N)^\eta)  + \mathcal  O(\delta^{\eta'-2} N^{-\eta'}
\delta   \log   N)   =    \mathcal   O((\delta/N)^\eta)   +   \mathcal
O(\delta^{\eta'-1} N^{-\eta'}  \log N).\] This  means that as  soon as
$N$ grows  fast enough as  a function of  $\delta$ to make  the second
error term go  to $0$, $I$ will be uniformly  controlled. The bound we
get is  not optimal at  all, but for  further reference, taking
\begin{equation}
  N = \delta ^{1-\varepsilon-1/\eta'}
\end{equation}
for arbitrary  $\varepsilon>0$  is
enough.

\begin{remark}
  The value of the $3$-arm exponent  is expected to be equal to $2/3$,
  which means that the above convergence  of $I$ to $0$ should hold as
  soon as $N \gg \delta^{-1/2}$ --- though of course we have no way to
  obtain the value of the exponent before proving conformal invariance
  in the first place.
\end{remark}

\subsection{Concluding the proof}
\label{sec-4-4}

Now that we  have the main estimate on the  discrete integral $I$, the
remainder of  the proof is  actually very  close to the  regular case.
Pick  a sequence  $\delta_k  \downarrow 0$,  choose  $N_k \to  \infty$
satisfying the previous  lower bound. Up to  a subsequence extraction,
we  can assume  that  $(H^{(\delta_k,N_k)})$  converges, uniformly  on
compact subsets of $\Omega$, to some continuous function $h:\Omega \to
\mathbb C$. Simultaneously, $\Phi_{\delta_k}$ converges uniformly to a
real-affine map $\Phi_0 : \mathbb C  \to \mathbb C$. From the previous
section,  we directly  obtain, for  any smooth  closed curve  $\gamma$
contained in $\Omega$,
\begin{equation}
  \oint_\gamma h(z)  \mathrm d \Phi_0(z) = 0.
\end{equation}
This means that  $h \circ \Phi_0^{-1}$ is holomorphic,  from which the
statement follows;  $\Phi_0$ is the  real-linear map appearing  in the
conclusion of the theorem.

\section{Concluding remarks}
\label{sec-5}

A key step of the argument  above relies on \emph{a priori} bounds for
box-crossings.  While  this looks  like  a  rather strong  assumption,
actually a closer look at the proof shows that one can almost get away
without it.  Indeed, even though  we might  not have assumption  1, we
still have  the corresponding statement  for boxes contained  within a
single $\delta$-cell because there the  graph structure is that of the
triangular lattice  (for which  box-crossing estimates are  known). In
particular, the discrete derivative estimates that we used, namely
\begin{equation}
  \partial_e H  = \mathcal  O((\delta/N)^\eta) \quad  \text{and} \quad
  P_a(e) =  \mathcal O((\delta/N)^{\eta'})
\end{equation}
can instead  be replaced by
their counterparts within  a $\delta$-cell. This leads  to much weaker
bounds:
\begin{equation}
  \partial_e H = \mathcal O(N^{-\eta}) \quad \text{and} \quad P_a(e) =
  \mathcal O(N^{-\eta'})
\end{equation}
but otherwise the proof proceeds without a change. The lower bound for
the growth of $N$ in  terms of $\delta$ grows like $\delta^{-1/\eta'}$
rather than $\delta^{1-1/\eta'}$, which is only a little worse.

The only  place where I could  not get rid  of Assumption 1 is  in the
proof of  uniform continuity for  the observable. Indeed, for  that to
hold one needs  to control differences in $H$  across different cells,
and  then  the estimate  from  the  triangular lattice  alone  becomes
trivial. On the other hand, all that  is needed here is the ability to
extract converging subsequences as  the lattice mesh vanishes; uniform
Hölder estimates  are a  nice way  to get that,  but perhaps  a weaker
version  of equicontinuity  can  be obtained  (for  instance from  the
information, which  we do have, that  there is no infinite  cluster at
the critical point).

\bigskip

One last, more  positive remark about assumption 1 is  in order: while
none  of the  known proofs  of box-crossing  estimates seems  to apply
uniformly in $(\delta,N)$ in the general  case, as soon as the initial
lattice  has  more symmetry  (for  instance,  if  in addition  to  its
periodicity it has an embedding  that is invariant under a $90$-degree
rotation) they can be extended and do provide the necessary bounds. In
that case, symmetry implies also that the modulus $\tau_G$ constructed
in the first  section is actually equal to $i$,  and the whole picture
is much more explicit.

\appendix

\section{An integral characterization of qc maps}
\label{sec:qc}

Let  $\Omega$ be  a simply  connected domain,  and $\mu  : \Omega  \to
\mathbb C$  be piecewise continuous and  such that $\|\mu\|_\infty<1$.
Fix  $\varphi_0 :  \Omega  \to  \mathbb C$  solution  to the  Beltrami
equation with coefficient $\mu$.

\begin{proposition}
  With the above notation, a continuous, injective function $\varphi :
  \Omega \to  \mathbb C$ is  itself solution to the  Beltrami equation
  with coefficient $\mu$ if and only  if for any Jordan curve $\gamma$
  in  $\Omega$, one  has  \[\oint_\gamma \varphi(z)  d \varphi_0(z)  =
  \oint_\gamma \varphi(z) \; \partial_z \varphi_0(z) \; [dz + \mu(z) d
  \bar z]= 0.\]
\end{proposition}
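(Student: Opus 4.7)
The plan is to reduce the statement to the classical Morera theorem via the change of variable $w = \varphi_0(z)$. Let $\Omega_0 \eqd \varphi_0(\Omega)$; since $\varphi_0$ is a quasi-conformal homeomorphism from $\Omega$ onto $\Omega_0$, the composition $g \eqd \varphi \circ \varphi_0^{-1}$ is a continuous, injective function from $\Omega_0$ to $\mathbb{C}$. The whole argument rests on the (Stoilow-type) equivalence that $\varphi$ satisfies the Beltrami equation with coefficient $\mu$ if and only if $g$ is holomorphic on $\Omega_0$: in the forward direction this is the standard chain rule for the Beltrami coefficient; in the backward direction, the injectivity of $\varphi$ passes to $g$, so as soon as $g$ is holomorphic and injective it is conformal, and $\varphi = g \circ \varphi_0$ inherits the Beltrami coefficient $\mu$ from $\varphi_0$ because left-composition with a conformal map preserves it.

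The central computation is the change-of-variable formula
\[
  \oint_\gamma \varphi(z) \, d\varphi_0(z) = \oint_{\tilde\gamma} g(w) \, dw, \qquad \tilde\gamma \eqd \varphi_0 \circ \gamma.
\]
For a Jordan curve $\gamma$ in $\Omega$ parametrized by $\gamma(t)$ on $t \in [0,1]$, the curve $\tilde\gamma$ is a Jordan curve in $\Omega_0$, and conversely every Jordan curve in $\Omega_0$ arises this way since $\varphi_0$ is a homeomorphism. Wherever $\varphi_0$ is differentiable along $\gamma$, the chain rule gives $\tilde\gamma'(t) = \tfrac{d}{dt} \varphi_0(\gamma(t))$ and $\varphi(\gamma(t)) = g(\tilde\gamma(t))$, so the equality above reduces to two identical one-dimensional integrals in $t$.

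With this identity in hand, both implications follow. If $\varphi$ is a Beltrami solution, then $g$ is holomorphic on the simply connected domain $\Omega_0$ and Cauchy's theorem gives $\oint_{\tilde\gamma} g \, dw = 0$, hence $\oint_\gamma \varphi \, d\varphi_0 = 0$. Conversely, if the contour integrals in the statement all vanish, the change of variable yields $\oint_{\tilde\gamma} g \, dw = 0$ for every Jordan curve $\tilde\gamma \subset \Omega_0$, and the classical Morera theorem (applied for instance to inscribed triangles) implies that the continuous function $g$ is holomorphic; the Stoilow factorization then forces $\varphi$ to solve the Beltrami equation with coefficient $\mu$.

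The main technical obstacle is justifying the change of variable when $\mu$ is only piecewise continuous, so that $\varphi_0$ is not globally $C^1$. I would handle this by exploiting that the discontinuity locus of $\mu$ is a locally finite union of smooth arcs: $\gamma$ can be slightly perturbed so as to meet this locus only transversally and at finitely many points, reducing the integral to a finite sum of pieces along each of which $\varphi_0$ is $C^1$ and the pullback identity is classical. The ACL property of quasi-conformal maps together with the a.e.\ differentiability of $\varphi_0$ then control the passage back to the unperturbed $\gamma$. Once this is settled, everything else is a direct appeal to Cauchy--Morera plus elementary chain-rule manipulations.
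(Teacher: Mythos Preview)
Your argument is correct and follows essentially the same route as the paper: define $g=\varphi\circ\varphi_0^{-1}$, observe that $\varphi$ solves the Beltrami equation iff $g$ is holomorphic, and reduce the integral condition to Morera's theorem via the change of variable $w=\varphi_0(z)$. The paper's proof is terser and does not discuss the regularity issue you raise in your last paragraph, but the strategy is identical.
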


\begin{proof}
  Let $g = \varphi \circ \varphi_0^{-1}$. $\varphi$ is quasi-conformal
  with Beltrami coefficient  $\mu$ if and only if  $g$ is holomorphic,
  and this in turn can be  characterized using Morera's theorem: it is
  the  case if  and  only  if for  every  closed  curve $\gamma_0$  in
  $\varphi_0(\Omega)$,      \[\oint_{\gamma_0}     [\varphi      \circ
  \varphi_0^{-1}]  (w) dw  =  0.\] It  is just  a  matter of  changing
  variables, letting  $w = \varphi_0(z)$ and  $\gamma = \varphi_0^{-1}
  \circ   \gamma_0$,   to   get  \[\oint_{\gamma_0}   [\varphi   \circ
  \varphi_0^{-1}] (w)  dw = \oint_\gamma \varphi(z)  d \varphi_0(z).\]
  In more geometric terms, all the proof amounts to saying is that the
  data of $\mu$ endows $\Omega$ with a complex structure and therefore
  a  notion  of  holomorphic  function,  and that  in  a  given  chart
  ($\varphi_0$ here), those are  usual analytic function characterized
  for instance by an integral formula.
\end{proof}

\section{About the pictures, and circle packings}

Solving the Beltrami equation of section~\ref{sec-2-2} analytically is
usually impossible to do, and even though it is known that $\tau_G$ is
always an algebraic number, computing  its minimal polynomial seems to
be  beyond  the reach  of  systematic  methods. Solving  the  Beltrami
equation  numerically  is  quite  involved.  On  the  other  hand,  an
approximation  that is  good  enough for  the  purpose of  \emph{e.g.}
generating  Figure~\ref{fig:tri_qc}  can   be  obtained  using  circle
packings.

\begin{figure}[ht!]
  \centering
  \includegraphics[width=\hsize]{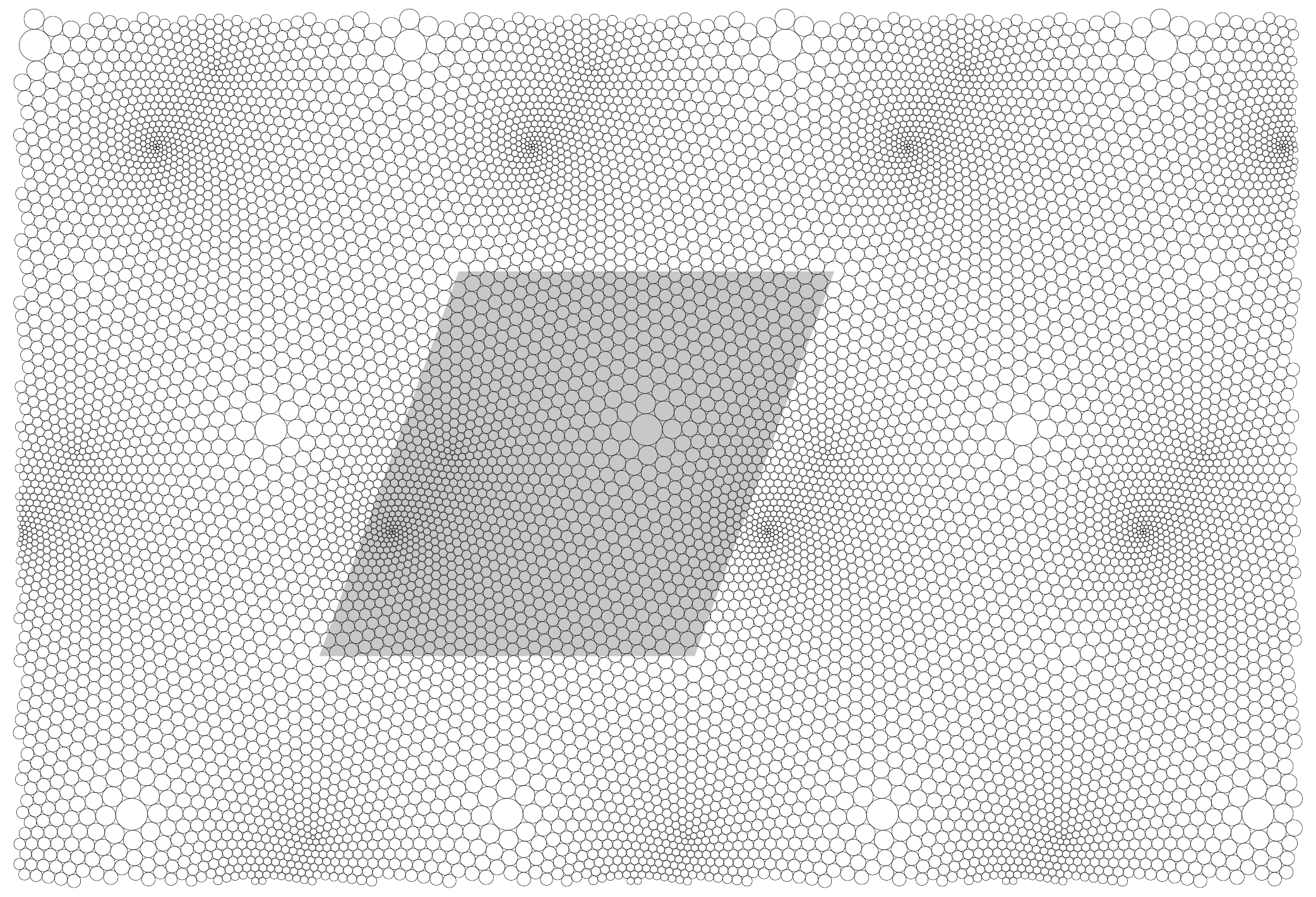}
  \caption{The circle packing obtained  from $\mathcal T_{1,N}$, where
    $\mathcal  T$ is  the  triangulation  in Figure~\ref{fig:tri}  and
    $N=16$.}
  \label{fig:cp}
\end{figure}

More specifically, keeping $\delta=1$,  to the triangulation $\mathcal
T_{1,N}$  described in  the  introduction  corresponds an  essentially
unique circle packing in the plane (see Figure~\ref{fig:cp}), which is
a collection of  disks of disjoint interiors, indexed  by the vertices
of $\mathcal T_{1,N}$, and such that two disks are tangent if and only
if  the corresponding  vertices are  adjacent (two  such packings  are
conjugated by a  global map of the form $az+b$,  and one can normalize
the choice by imposing  that $0$ and $1$ are the  centers of the disks
corresponding  to  the  vertices  that   are  at  these  locations  in
Figure~\ref{fig:tri}).

One can then re-embed $\mathcal T_{1,N}$  in the plane by mapping each
vertex  to the  center  of the  corresponding disk;  this  map can  be
extended to a piecewise linear map $\psi_N:\mathbb C \to \mathbb C$ by
interpolation  on the  faces of  $\mathcal  T_{1,N}$. In  fact, it  is
possible to show that as $N\to\infty$, $\psi_N$ converges uniformly to
the    map    $\phi_G$     constructed    in    section~\ref{sec-2-2}.
Figure~\ref{fig:tri_qc} was obtained by drawing the images by $\psi_N$
of  the edges  of $\mathcal  T_{1,\delta}$  lying along  edges of  the
initial    triangulation    $\mathcal    T$   (here    $N=16$),    and
Figure~\ref{fig:fine} by simply keeping all the edges.

\bibliographystyle{siam}
\bibliography{Biblio,Fullname}

\end{document}